\numberwithin{equation}{section}
\setlist[description]{style=nextline}
\setlist[itemize,enumerate]{leftmargin=*,itemsep=0pt,parsep=0pt}
\newcommand{\N}{\mathbbm{N}}                     % the natural numbers
\newcommand{\Z}{\mathbbm{Z}}                     % the integer numbers
\newcommand{\R}{\mathbbm{R}}                     % the real line
\newcommand{\D}{\mathbb{D}}
\newcommand{\Diff}{\mathrm{Diff}}               % diffeomorphisms group
\newtheorem{thm}{Theorem}[section]               % numbered absolutely
\newtheorem*{thm*}{Theorem}               % no number
\newtheorem{cor}[thm]{Corollary}        % numbered along with Theorem
\newtheorem*{cor*}{Corollary}        % no number
\newtheorem{lem}[thm]{Lemma}  
\newtheorem*{lem*}{Lemma}
\newtheorem{prop}[thm]{Proposition}     % numbered along with Theorem
\theoremstyle{definition}
\newtheorem{defn}[thm]{Definition}      % numbered along with Theorem
\newtheorem{rem}[thm]{Remark}           % numbered along with Theorem  
\newtheorem{ques}[thm]{Question}   
 \newtheorem*{acknowledgement*}{\protect\acknowledgementname}
\newcounter{claim}
\newenvironment{claimproof}[1]{\par\noindent\underline{Proof:}\space#1}{\qed}
 \providecommand{\acknowledgementname}{Acknowledgement}
\author{David Bechara Senior}
\address{David Bechara Senior, RWTH Aachen, Jakobstrasse 2, Aachen 52064, Germany}
\email{\texttt{bechara@mathga.rwth-aachen.de}}
\author{Patrice Le Calvez}
\address{Patrice Le Calvez, Institut de Math\'ematiques de Jussieu-Paris Rive Gauche, IMJ-PRG, Sorbonne Universit\'e, Universit\'e Paris-Cit\'e, CNRS, F-75005, Paris, France}
\email{\texttt{patrice.le-calvez@imj-prg.fr}}
\author{Abror Pirnapasov}
\address{Abror Pirnapasov, University of Maryland, 4176 Campus Drive - William E. Kirwan Hall
College Park, MD 20742}
\email{\texttt{abrorpirnapasov@gmail.com}}
\title[The asymptotic mean action and the asymptotic linking number for pseudo-rotations]{The asymptotic mean action and the asymptotic linking number for pseudo-rotations}
\begin{document}

\begin{abstract}
We establish that the asymptotic mean action and the asymptotic linking number of irrational pseudo-rotations remain well-defined everywhere and constant for every $C^{1}$ irrational pseudo-rotation that behaves as a rotation on the boundary. As a consequence, we demonstrate that the isotopy of irrational pseudo-rotations with a positive rotation number is a right-handed isotopy in the sense of Ghys. 
\end{abstract}

\maketitle
\section{Introduction and Main Results}
\subsection{The asymptotic mean action of an irrational pseudo-rotation}

We consider $\mathbb{D}\subset \R^2
$ the unit closed two disk centered at the origin with polar coordinates $(r,\theta)$. Let $f$ be an area-preserving smooth diffeomorphism of $\D$  isotopic to $\mathrm{id}$ via an area-preserving isotopy $\tilde{f}=\{f_t\}_{t\in [0,1]}$. Let $\beta$ be a primitive of the standard symplectic form $\omega=\frac{r}{\pi}dr\wedge d\theta.$ 

The \textit{action} of $f$ with respect to $\beta$ and the given isotopy is the unique smooth function \sloppy $a_{\tilde{f}}(x):\mathbb{D}\to \mathbb{R}$ satisfying:
\begin{equation}\label{boundary123}
    \begin{cases}
        da_{\tilde{f}}=f^{*}\beta-\beta\\
        a_{\tilde{f}}(x)=\int_{\tilde{f}(x)}\beta \qquad \text{for every} \  x\in \partial\mathbb{D},
    \end{cases}
\end{equation}
where $\tilde{f}(x)$ is the path $t\mapsto f_{t}(x)$.

 We define the corresponding Calabi invariant by
\[
\mathrm{CAL}(\tilde{f}):=\int_{\mathbb{D}}a_{\tilde{f}} \  \omega.
\]
{The quantity $a^{\infty}_{\tilde{f}}(x)$ is called the \textit{asymptotic mean action of 
 $x$ with respect to $f$}, whenever  following limit exists}
\begin{equation*}
    a^{\infty}_{\tilde{f}}(x):=\lim_{n\rightarrow \infty}\frac{1}{n}\sum_{i=0}^{n-1}a_{\tilde{f}}({f}^{i}(x)).
\end{equation*}
A straightforward application of Birkhoffs' ergodic theorem implies that $a^{\infty}_{\tilde{f}}(x)$ is well defined for Lebesgue a.e. $x\in\mathbb{D}$, furthermore it is independant of the primitive $\beta$ of $\omega$. If $x$ is periodic or  $x\in\partial \mathbb{D}$, then  
$a^{\infty}_{\tilde{f}}(x)$ is well defined. We will denote $\mathrm{Per}(f)$ the set of periodic points of $f$. Moreover $a^{\infty}_{\tilde{f}}(x)$ is equal to $\rho(\tilde f\big|_{\partial \mathbb{D}})$ for every $x \in \partial\D$ where $\rho(\tilde f\big|_{\partial \mathbb{D}})$ is the rotation number of the isotopy $\tilde {f}\big| _{\partial \mathbb D}=\{ f_t\big\vert_{\partial \mathbb{D}}\}_{t\in [0,1]}$ restricted to the boundary of the disc $\partial \mathbb{D}$.

In \cite{hutchings2016mean} Hutchings discovered a very interesting relation between the Calabi invariant and the asymptotic mean action of periodic points of smooth area-preserving disk maps which are a rotation on the boundary.
\begin{thm}\cite{hutchings2016mean}\label{hut}
     Let $f$ be a smooth area-preserving diffeomorphism of $\mathbb{D}$
that is a rigid rotation near the boundary and isotopic to $\mathrm{id}$ via $\tilde{f}$. If 
\begin{align}\label{1.2}
\mathrm{CAL}(\tilde{f}) < \rho(\tilde f\big|_{\partial \mathbb{D}}), 
\end{align}
then
\begin{align}\label{1.3}
\inf \left\{  a^{\infty}_{\tilde{f}}(x)\,|\, x \in \mathrm{Per}(f)\right\}\leq \mathrm{CAL}(\tilde{f}).
\end{align}
\end{thm}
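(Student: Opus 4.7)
The approach I would take is via periodic Floer homology (PFH) of an extension of $f$ to the sphere, following the strategy introduced by Hutchings. The underlying idea is that PFH spectral invariants in large degree satisfy an asymptotic formula of Calabi type; under the hypothesis $\mathrm{CAL}(\tilde{f}) < \rho(\tilde{f}|_{\partial\mathbb{D}})$, this forces the PFH minimizers to contain interior periodic orbits whose mean action is close to $\mathrm{CAL}(\tilde{f})$.

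First I would extend $f$ to a smooth area-preserving diffeomorphism $\bar{f}$ of $S^2$ by gluing a second disk $\mathbb{D}_\infty$ of chosen area on which $\bar{f}$ is a smoothened rotation with irrational rotation number $\rho_\infty > \rho(\tilde{f}|_{\partial\mathbb{D}})$, admitting a unique elliptic fixed point $p_\infty$ at its center. The rigidity of $f$ near $\partial\mathbb{D}$ ensures this extension is smooth, and the Calabi invariant of $\bar{f}$ differs from $\mathrm{CAL}(\tilde{f})$ by a correction depending on $\rho_\infty$ and the added area, which can be made arbitrarily small by taking $\rho_\infty$ close to $\rho(\tilde{f}|_{\partial\mathbb{D}})$ and the added area small.

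Next I would introduce the PFH groups $\mathrm{PFH}_*(\bar{f}, d)$ in each degree $d \geq 1$, generated by orbit sets of total homological degree $d$, together with the $U$-map $U \colon \mathrm{PFH}_*(\bar{f}, d) \to \mathrm{PFH}_{*-2}(\bar{f}, d)$ and spectral invariants $c_\sigma(\bar{f})$ for nonzero classes $\sigma$. The classes to track are $\sigma_d := U^{N_d}\Theta_d$, where $\Theta_d \in \mathrm{PFH}_*(\bar{f}, d)$ is the top class supported on the $d$-fold cover of $p_\infty$ and $N_d$ grows linearly with $d$. The central estimate is a Weyl-type asymptotic
\[
\limsup_{d \to \infty} \frac{c_{\sigma_d}(\bar{f})}{d} \;\le\; \mathrm{CAL}(\bar{f}),
\]
whose right-hand side, under the hypothesis $\mathrm{CAL}(\tilde{f}) < \rho(\tilde{f}|_{\partial\mathbb{D}})$ and an appropriate choice of $\rho_\infty$, is strictly smaller than $\rho_\infty$, which is exactly the action-per-degree of the pure $d$-fold cover of $p_\infty$. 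Consequently, for large $d$ the PFH generator realizing $c_{\sigma_d}(\bar{f})$ must contain some orbit $\gamma_i$ whose underlying periodic point $x_i$ lies in the interior of $\mathbb{D}$.

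Finally, writing $c_{\sigma_d}(\bar{f}) = \sum_i m_i \mathcal{A}(\gamma_i)$ and using the identity $\mathcal{A}(\gamma_i) = \mathrm{period}(\gamma_i)\, a^\infty_{\tilde{f}}(x_i)$, which follows from the definitions once $\beta$ is chosen compatibly on the mapping torus, a pigeonhole on the weights $m_i\,\mathrm{period}(\gamma_i)$ produces some $x_d \in \mathrm{Per}(f)$ with $a^\infty_{\tilde{f}}(x_d) \le c_{\sigma_d}(\bar{f})/d$. Hence $\inf\{a^\infty_{\tilde{f}}(x) : x \in \mathrm{Per}(f)\} \le c_{\sigma_d}(\bar{f})/d$; sending $d \to \infty$ and then $\rho_\infty \searrow \rho(\tilde{f}|_{\partial\mathbb{D}})$ yields \eqref{1.3}. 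The principal obstacle is the Weyl-type asymptotic identifying $\lim c_{\sigma_d}/d$ with the Calabi invariant: this is the genuinely deep analytic input, proved by a direct $U$-map computation in Hutchings' original approach or, in greater generality, by the subsequent PFH Weyl law machinery.
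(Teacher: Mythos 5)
The paper cites Theorem~\ref{hut} from \cite{hutchings2016mean} without proof, so the relevant comparison is with Hutchings' original argument. You describe a PFH route: extend $f$ to a sphere map, track spectral invariants $c_{\sigma_d}$ of the extension, and invoke a Weyl-type asymptotic $\limsup_d c_{\sigma_d}/d\leq\mathrm{CAL}$. That is not, in fact, the strategy in \cite{hutchings2016mean}. Hutchings works with embedded contact homology of a contact form on $S^3$ whose Reeb flow has $f$ as the first return map on a disk page of an open book; the new technical device in that paper is a filtration of ECH by linking number with the binding transverse knot, and the asymptotic input is the already-available ECH Weyl law for the capacities $c_k$. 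Your PFH version is a legitimate alternative --- it is essentially the framework used in Weiler's annulus result \cite{weiler2021mean} and in the later generalizations by Pirnapasov \cite{Pirnapasov2021HutchingsIF} and Pirnapasov--Prasad \cite{PirPra} --- but the PFH Weyl law you invoke is itself a hard theorem, established only around 2021 (Cristofaro-Gardiner--Humili\`ere--Seyfaddini; Edtmair--Hutchings), so this route is not more elementary than the original and should not be described as ``following the strategy introduced by Hutchings.'' The surrounding bookkeeping is essentially sound, with one step worth tightening: the pigeonhole on $\sum_i m_i\mathcal{A}(\gamma_i)=c_{\sigma_d}$ should be phrased as a convexity argument --- since $c_{\sigma_d}/d$ is a weighted average of the mean actions $\mathcal{A}(\gamma_i)/\mathrm{period}(\gamma_i)$ with $\rho_\infty>c_{\sigma_d}/d$, some interior orbit must realize mean action at most $c_{\sigma_d}/d$ --- rather than merely asserting that some orbit lies in the interior.
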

Weiler \cite{weiler2021mean} established a similar result for area-preserving diffeomorphisms of the closed annulus. The third author \cite{Pirnapasov2021HutchingsIF} proved Theorem \ref{hut} without the assumption of the map being a rotation near the boundary by using preliminary extensions. The third author and Prasad \cite{PirPra} proved the validity of inequality (\ref{1.3}) for generic Hamiltonian maps of surfaces with boundaries by proving the generic equidistribution theorem for Hamiltonian diffeomorphisms of surfaces with boundary. Recently, in \cite{nelson2023torus}, Nelson and Weiler established a nongeneric version of Theorem \ref{hut} on higher-genus surfaces but with stronger assumptions. 

For the particular case of irrational pseudo-rotations the third author showed that the Calabi invariant of smooth irrational pseudo-rotations is equal to the rotation number(cf.\cite{Pirnapasov2021HutchingsIF}), this was proven by using the general case of Theorem \ref{hut}. This result was subsequently reaffirmed by the second author using finite-dimensional methods for $C^{1}$ irrational pseudo-rotations which are conjugated to a rotation on the boundary.

The following question, posed in \cite{senior2020relation} in the context of three-dimensional Reeb dynamics, addresses the relationship between the asymptotic mean action of periodic points and the asymptotic mean action of all points:
\begin{ques}\label{ques}
Does the following equality hold for all area-preserving disk maps?
\begin{equation}\label{ques1}
    \overline{\big\{a^{\infty}_{\tilde{f}}(x): \ x\in\mathbb{D}\big\}}=   \overline{\big\{a^{\infty}_{\tilde{f}}(x): \ x\in \mathrm{Per}(f) \big\}}
\end{equation}
\end{ques}
Equality (\ref{ques1}) implies Theorem \ref{hut} because the set $\overline{\big\{a^{\infty}_{\tilde{f}}(x): \ x\in\mathbb{D}\big\}}$ contains the Calabi invariant. In his thesis \cite{BecharaSenior2023}, the first author showed that the equality (\ref{ques1}) holds for all diffeomorphisms generated by an autonomous Hamiltonian of the disc.
\begin{defn}
    An area-preserving  diffeomorphism $f:\mathbb{D}\to \mathbb{D}$ is called an irrational pseudo-rotation if $\#\mathrm{Per}(f)=1$.
\end{defn}
In the following theorem, we affirmatively answer question \ref{ques} for $C^{1}$ irrational pseudo-rotations, which are conjugate to a rotation when restricted to the boundary.
\begin{thm}\label{asymp-mean}
    If $f:\mathbb{D} \to \mathbb{D}$ is a $C^{1}$ irrational pseudo-rotation that is $C^{1}$ conjugate to a rotation on the boundary, then the sequence of functions 
    $$\frac{\sum_{i=0}^{n-1}a_{\tilde{f}}({f}^{i}(x))}{n}$$ uniformly converges to $\rho(\tilde{f})$. In particular, the asymptotic mean action $a^{\infty}_{\tilde{f}}(x)={\rho(\tilde{f})}$, where $\rho(\tilde{f})$ is the Poincare rotation number of the isotopy restricted to the boundary of the disk.
\end{thm}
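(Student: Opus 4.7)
The plan is to restate the theorem in terms of empirical measures, reduce uniform convergence to the fact that every $f$-invariant Borel probability measure on $\mathbb{D}$ has mean action equal to $\rho(\tilde{f})$, and then establish that mean-action identity using the pseudo-rotation hypothesis together with Le Calvez's theory of rotation numbers for surface dynamics.

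First I would rewrite
\[
g_n(x) \;:=\; \frac{1}{n}\sum_{i=0}^{n-1} a_{\tilde{f}}(f^i(x)) \;=\; \int_{\mathbb{D}} a_{\tilde{f}}\,d\mu_n^x, \qquad \mu_n^x := \frac{1}{n}\sum_{i=0}^{n-1}\delta_{f^i(x)}.
\]
Since $\mu_n^x$ and $f_{*}\mu_n^x$ differ only in their two extremal atoms of mass $1/n$, any weak-$*$ accumulation point of a family $(\mu_{n_k}^{x_k})$ with $n_k\to\infty$ is an $f$-invariant Borel probability measure on $\mathbb{D}$. Granting the key identity
\[
(\star)\qquad \int_{\mathbb{D}} a_{\tilde{f}}\, d\mu \;=\; \rho(\tilde{f}) \qquad \text{for every $f$-invariant probability measure $\mu$ on $\mathbb{D}$,}
\]
uniform convergence of $g_n$ to $\rho(\tilde{f})$ follows by a soft compactness argument: if instead $|g_{n_k}(x_k) - \rho(\tilde{f})| \ge \varepsilon$ along some sequences $n_k\to\infty$ and $x_k\in\mathbb{D}$, then by compactness of $\mathbb{D}$ and of the space of Borel probability measures on $\mathbb{D}$ in the weak-$*$ topology we may pass to a subsequence with $\mu_{n_k}^{x_k}\to\mu$; the limit $\mu$ is $f$-invariant by the observation above, and the continuity of $a_{\tilde{f}}$ combined with $(\star)$ yields $g_{n_k}(x_k) = \int a_{\tilde{f}}\,d\mu_{n_k}^{x_k}\to\int a_{\tilde{f}}\,d\mu = \rho(\tilde{f})$, a contradiction.

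It therefore remains to establish $(\star)$. By ergodic decomposition it suffices to consider ergodic $\mu$. Since $f$ is an irrational pseudo-rotation, its only periodic point is a single fixed point $p$, so every ergodic $\mu$ is either $\delta_p$ or satisfies $\mu(\{p\})=0$. For $\delta_p$ one needs the identity $a_{\tilde{f}}(p) = \rho(\tilde{f})$, which expresses that the action at the unique fixed point equals the boundary rotation number; it can be obtained from the previously known equality $\mathrm{CAL}(\tilde{f})=\rho(\tilde{f})$ for irrational pseudo-rotations together with the pseudo-rotation property. For nonatomic ergodic $\mu$, the idea is to exploit the defining relation $da_{\tilde{f}} = f^{*}\beta - \beta$ to interpret $a_{\tilde{f}}$ on $\mathbb{D}\setminus\{p\}$, modulo a coboundary, as an angular displacement cocycle around $p$, so that $\int a_{\tilde{f}}\,d\mu$ becomes the mean rotation number of $\mu$ about $p$; one then invokes Le Calvez's equivariant transverse foliation machinery to conclude that for an irrational pseudo-rotation which is conjugate to a rotation on $\partial\mathbb{D}$, every nontrivial orbit has the same rotation number as the boundary, namely $\rho(\tilde{f})$, and hence so does $\mu$.

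The main obstacle will be this last step: identifying the integral of $a_{\tilde{f}}$ against an arbitrary nonatomic invariant measure with a rotation number, and then using the pseudo-rotation hypothesis, the $C^{1}$ regularity, and the boundary hypothesis to force that rotation number to equal $\rho(\tilde{f})$. Once $(\star)$ is established, the uniform convergence asserted in the theorem follows essentially for free from the compactness argument of the second paragraph.
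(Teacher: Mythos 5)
Your reduction of the theorem to the key identity $(\star)$ — that $\int_{\mathbb{D}} a_{\tilde f}\,d\mu = \rho(\tilde f)$ for every $f$-invariant probability measure $\mu$ — and your compactness argument deriving uniform convergence from $(\star)$ both match the paper's strategy essentially verbatim (the paper phrases it as a Krylov--Bogolyubov argument, but it is the same soft contradiction via weak-$*$ accumulation of empirical measures). That part is sound.

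The gap is in your proof of $(\star)$. You propose to view $a_{\tilde f}$, ``modulo a coboundary,'' as the angular displacement cocycle around $p$, so that $\int a_{\tilde f}\,d\mu$ becomes the rotation number of $\mu$ about $p$, and then invoke Franks/Le Calvez to conclude. But this conflates two genuinely different quantities. What is actually true (Theorem \ref{bech}, inequality \eqref{actionbound}) is that $a_{\tilde f^n}(x)$ differs by a uniformly bounded amount from $\int_{\mathbb{D}} W_{\tilde f^n}(x,y)\,d\omega(y)$ — the winding of $x$ against \emph{all of Lebesgue measure}, not against the fixed point $0$. Passing to Birkhoff averages and integrating yields
\[
\int_{\mathbb{D}} a_{\tilde f}\,d\mu \;=\; \int_{\mathbb{D}\times\mathbb{D}} W_{\tilde f}(x,y)\,d\omega(y)\,d\mu(x),
\]
a \emph{double} integral of the winding function against the pair $(\mu,\omega)$ of invariant measures, whereas the rotation number of $\mu$ is the \emph{single} integral $\int W_{\tilde f}(0,y)\,d\mu(y)$. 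These are not a priori equal, and no coboundary relation $a_{\tilde f} - W_{\tilde f}(0,\cdot) = h\circ f - h$ is available without proof — indeed, establishing such a cohomology would be equivalent to, not a lemma for, the claim you want. The paper's actual work (Section 3, culminating in Theorem \ref{intmeas}) is precisely to extend Le Calvez's $\int\int W_{\tilde f}\,d(\mathrm{Leb}\times\mathrm{Leb}) = \rho(\tilde f)$ identity to arbitrary pairs of invariant probability measures, via the radial-foliation bounds of Lemmas \ref{mainclaim2} and \ref{bound2} and an approximation by convergents $a_n/b_n\to\alpha$. Your proposal does not identify that this generalization is needed, nor supply it; citing ``Le Calvez's equivariant transverse foliation machinery'' at the level of single rotation numbers does not cover it. Separately, your treatment of the atomic part is also off: $a_{\tilde f}(p)=\rho(\tilde f)$ does not follow from $\mathrm{CAL}(\tilde f)=\rho(\tilde f)$ (the latter is an integral identity carrying no pointwise information); the paper gets it from Theorem \ref{bech} evaluated at $x=0$ together with Franks' Theorem \ref{rotation} applied to $\omega$.
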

\begin{rem}
We prove Theorem \ref{asymp-mean} for irrational pseudo-rotations that fix $0$ and coincide with a rotation on the boundary. This is merely a technical condition because if $x\in \mathbb{D}$ is a fixed point of an irrational pseudo-rotation $f$, then there exists a compactly supported Hamiltonian $g$ such that $g(0)=x$. This implies that $g\circ f\circ g^{-1}$ is also an irrational pseudo-rotation and $0$ is its fixed point.  We also note that every $C^1$ diffeomorphism of $\partial {\mathbb{D}}$ can be extended as a $C^1$ area preserving diffeomorphism of $ \mathbb{D}$. This is the reason why we can suppose that $f$ is $C^1$ conjugate to a rotation on $\partial \mathbb{D}$ in Theorem 1.3.  
\end{rem}
 In his thesis \cite{BecharaSenior2023}, the first author proved Theorem \ref{asymp-mean} for $C^{0}$-rigid irrational pseudo-rotation using Joly's winding number bound [Lemma 5.3 in\cite{beno}] for $C^{0}$-rigid irrational pseudo-rotations. This statement also holds for minimally ergodic irrational pseudo-rotations \cite{fayad2004constructions}, which are irrational pseudo-rotations having only three ergodic invariant measures: the Dirac measure at the fixed point, the Lebesgue measure on the boundary, and the Lebesgue measure on the disc. Together with the Calabi identity result in \cite{Pirnapasov2021HutchingsIF}, minimally ergodicity implies that the integral of the action function with respect to any invariant probability measure is equal to the rotation number. This implication leads to the conclusion of Theorem \ref{asymp-mean} using a Krylov-Bogolyubov type argument. In the proof of Theorem \ref{asymp-mean}, we also prove that the integral of the action function with respect to any invariant measure is equal to the rotation number using Le Calvez's finite-dimensional methods.
\subsection{The asymptotic orbit linking number of an irrational pseudo-rotation}
Let $f\in \text{Diff}^{1}(\D, \omega_0)$  and let $\tilde{f}=\{f_t\}_{t\in[0,1]}$ be a Hamiltonian isotopy such that $f^0=\mathrm{id}$ and $f^1=f$. Here, $\text{Diff}^{1}(\D, \omega_0)$ is the set of $C^{1}$ diffeomorphisms of $\mathbb{D}$ which preserves $\omega_{0}$.
\begin{defn}\label{DefWindingNumber}
The \textit{winding number} $W_{\tilde{f}}(x, y)$ of a pair of distinct  points $x,y\in \D$ is the real number
\[
W_{\tilde{f}}(x, y):=\frac{{\theta}(1)-{\theta}(0)}{2\pi},
\]
where $\theta: [0,1] \rightarrow \R$ is a continuous function such that
\begin{equation}\label{star}
 \frac{f_t(y)-f_t(x)}{||f_t(y)-f_t(x)||}=e^{i\theta(t)}.
\end{equation}
\end{defn}
The value of $W_{\tilde{f}}(x, y)$ does not depend on the isotopy joining $f$ to the identity if isotopies have the same rotation number on the boundary. This holds because any two paths joining $f$ to the identity are homotopic if the isotopies have the same rotation number when restricted to the boundary, which implies that the winding number is the same. The map $W_{\tilde{f}}$ is smooth and bounded on $(\D\times\D)\setminus \Delta$, where $\Delta$ denotes the diagonal $\{(x, x)|x\in \D \} $ in $\D\times\D$. To see this, let $K$ be indeed the blow-up of the diagonal in $\D \times \D$. In more concrete terms, $K$ is the compact set formed by replacing $\Delta $ with the unit tangent bundle $T^1\D$). Of course $\D \times \D \setminus \Delta $ naturally embeds into $K$ as a dense open set. To be more precise a sequence $(x_n,y_n)\subset \D\times \D \setminus \Delta$ converges to a point $(x,\xi)\in T^1\D\simeq \mathbb{D}\times S^{1}$ whenever $x_n,y_n\rightarrow x$ and if the sequence of oriented vectors $\frac{y_n-x_n}{||y_n-x_n||}=e^{i\theta_n}\in \mathbb{S}^1$ converges to $\xi$. We assert that if $f$ is of class $C^1$, the function $W_{\tilde{f}}$ extends to $K$ as a continuous function, and in particular, $W_{\tilde{f}}$ is bounded. This is, in fact, an extension of the definition of $W_{\tilde{f}}$ to element the form $(x,\xi)\in T^1\D$. Let $(f_t)$ be the isotopy between the identity and $f$ among the $C^1$-diffeomorphisms of $\text{Diff}^1(\D, \omega_0)$. We then define $W_{\tilde{f}}(x,\xi)$ as the variation in the angle of the vector $df_t(\xi)\in \R^2$, images of $\xi$ under the differential $df_t$, as $t$ varies from $0$ to $1$. This indeed defines the desired continuous extension of $W_{\tilde{f}}$ to $K$.

Define the set $\mathbb{O}(\mathbb{D}) \subset \mathbb{D}\times\mathbb{D}$ as follows:
$$ \mathbb{O}(\mathbb{D})=\{(x,y)\in{\mathbb D}\times{\mathbb D}\,\vert \enskip f^i(x)\not=f^{j}(y) \enskip \mathrm{for\enskip every\enskip} (i,i')\in\mathbb{Z}\times \mathbb{Z}\}.$$
\begin{defn}
   Suppose $(x,y)\in \mathbb{O}(\mathbb{D})$, the number 
   \[
   L_{\tilde{f}}^{\infty}(x,y):=\lim_{n\to \infty}\frac{\sum_{i=0}^{n-1}\sum_{j=0}^{n-1}W_{\tilde{f}}(f^{i}(x),f^{j}(y))}{n^{2}}
   \]
   is called  the \textit{asymptotic orbit linking number} of $(x,y)$ if the limit exists.
\end{defn}
In the next theorem, we prove that the asymptotic linking number exists and is equal to the rotation number of the isotopy for every pair of points that are not in the same orbit of the map.
\begin{thm}\label{asym-link}
    If $f:\mathbb{D} \to \mathbb{D}$ is a $C^{1}$ irrational pseudo-rotation that is $C^{1}$ conjugate to a rotation on the boundary and $\tilde f$ is an isotopy from $\mathrm{id}$ to $f$, then   the sequence   
    \[
   \left (\frac{\sum_{i=0}^{n-1}\sum_{j=0}^{n-1}W_{\tilde{f}}(f^{i}(x),f^{j}(y))}{n^{2}}\right)_{n\geq 0}
    \]
    converges uniformly  to $\rho(\tilde{f})$ for every $(x,y)\in \mathbb{O}(\mathbb{D})$. In particular,  
    the asymptotic mean linking number $L_{\tilde{f}}^{\infty}(x,y)$ is defined for every $(x,y)\in \mathbb{O}(\mathbb{D})$ and equal to $\rho(\tilde{f})$. 
\end{thm}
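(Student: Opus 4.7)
The plan is to deduce Theorem \ref{asym-link} from Theorem \ref{asymp-mean} via a weak-$*$ compactness argument, using that $W_{\tilde{f}}$ admits a continuous extension to the blow-up $K$. For $(x,y)\in \mathbb{O}(\mathbb{D})$, set the empirical measure $\mu_n^x := \frac{1}{n}\sum_{i=0}^{n-1}\delta_{f^i(x)}$, so that
\begin{equation*}
\frac{1}{n^2}\sum_{i=0}^{n-1}\sum_{j=0}^{n-1} W_{\tilde{f}}(f^i(x),f^j(y)) = \int_{\mathbb{D}\times\mathbb{D}} W_{\tilde{f}}\, d(\mu_n^x \times \mu_n^y).
\end{equation*}
By the Krylov-Bogolyubov procedure, every weak-$*$ subsequential limit of $\mu_n^x \times \mu_n^y$ is a product $\mu \times \nu$ of $f$-invariant Borel probability measures on $\mathbb{D}$. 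Since the only periodic point of $f$ is $0$, such a limit can charge the diagonal $\Delta$ only at the pair $(0,0)$; the potential singularity there is absorbed by the continuous extension of $W_{\tilde{f}}$ to $K$, where the fibre above $(0,0)\in\Delta$ is controlled by the differential $df_0$.

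The core of the argument is then the identity
\begin{equation*}
\int_{\mathbb{D}\times\mathbb{D}} W_{\tilde{f}}(x,y)\, d\mu(x)\, d\nu(y) = \rho(\tilde{f})
\end{equation*}
for every pair $(\mu,\nu)$ of $f$-invariant probability measures. I would establish this by combining Theorem \ref{asymp-mean}, which forces $\int a_{\tilde{f}}\, d\mu = \rho(\tilde{f})$ for every invariant $\mu$, with a Gambaudo-Ghys type formula of the shape
\begin{equation*}
a_{\tilde{f}}(x) = \int_{\mathbb{D}} W_{\tilde{f}}(x,y)\, \omega(y) + c,
\end{equation*}
where $c$ absorbs the boundary contribution, proved from \eqref{boundary123} by a Stokes plus Fubini calculation on the blown-up product. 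Fubini together with the symmetry $W_{\tilde{f}}(x,y) = W_{\tilde{f}}(y,x)$ allows one to replace $\omega$ by an arbitrary $f$-invariant $\nu$ after applying the formula to both coordinates in turn.

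Once this integral identity is in hand, every subsequential weak-$*$ limit of $\mu_n^x \times \mu_n^y$ pairs with the continuous bounded function $W_{\tilde{f}}$ on the compact space $K$ to give the same value $\rho(\tilde{f})$. A standard extraction argument upgrades this to pointwise convergence of the double Birkhoff averages to $\rho(\tilde{f})$ on $\mathbb{O}(\mathbb{D})$; using that $(x,y)\mapsto \mu_n^x\times \mu_n^y$ is continuous into the space of probability measures on $K$ and that the common limit value is constant, one concludes uniform convergence on $\mathbb{O}(\mathbb{D})$.

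The main obstacle is the integral identity in the middle paragraph. The Gambaudo-Ghys formula is sensitive to the boundary term (since $f$ is only a rotation, not the identity, on $\partial \mathbb{D}$), and genuinely extending it from $\omega$ to a general invariant $\nu$ is the crux of the argument: this is exactly where Theorem \ref{asymp-mean} must be invoked in its full, uniform strength, together with the symmetry of $W_{\tilde{f}}$. A secondary difficulty is the uniform control near the diagonal, especially for empirical measures that concentrate near the fixed point $0$, where the product dynamics on $T^1_0\mathbb{D}\subset K$ has to be handled carefully through the continuous extension of $W_{\tilde{f}}$ provided by the differential $df_0$.
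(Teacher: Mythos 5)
Your overall architecture — pass to empirical measures, use Krylov--Bogolyubov to extract weak-$*$ limits of $\mu_n^x\times\mu_n^y$, work on the blow-up $K$ where $W_{\tilde f}$ is continuous, and reduce everything to an integral identity against pairs of invariant measures — matches the paper's proof and is sound. The genuine gap is in the middle paragraph: your route to the identity $\int W_{\tilde f}\,d(\mu\times\nu)=\rho(\tilde f)$ does not work.

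The Gambaudo--Ghys formula (the paper's Theorem \ref{bech}) ties the action $a_{\tilde f}(x)$ to $\int_{\mathbb{D}} W_{\tilde f}(x,y)\,\omega(y)$, i.e.\ to the winding integrated against the \emph{area form}. Combining it with Theorem \ref{asymp-mean} (so $\int a_{\tilde f}\,d\mu=\rho(\tilde f)$ for every invariant $\mu$) and Fubini yields only $\int W_{\tilde f}\,d(\mu\times\omega)=\rho(\tilde f)$ for every invariant $\mu$, and by the symmetry $W_{\tilde f}(x,y)=W_{\tilde f}(y,x)$ likewise $\int W_{\tilde f}\,d(\omega\times\nu)=\rho(\tilde f)$. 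Neither of these statements implies $\int W_{\tilde f}\,d(\mu\times\nu)=\rho(\tilde f)$ for two \emph{arbitrary} invariant $\mu,\nu$: if for instance $W_{\tilde f}(x,y)$ were $\rho(\tilde f)+g(x)+g(y)$ with $\int g\,d\omega=0$ but $\int g\,d\mu\neq 0$, both of your identities would hold while $\int W_{\tilde f}\,d(\mu\times\mu)\neq\rho(\tilde f)$. Applying the formula ``to both coordinates in turn'' still has $\omega$ on exactly one leg each time; the symmetry lets you move $\omega$ from the second factor to the first, not remove it. There is also a structural issue: in the paper the integral identity (Theorem \ref{intmeas}) is the input used to \emph{prove} Theorem \ref{asymp-mean}, so deriving the former from the latter runs backward.

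What is actually needed, and what the paper supplies, is the content of Section 3: using Le Calvez's radial foliations one constructs, for each convergent $a/b$ of the rotation number, a foliation whose leaves are Brouwer lines for $\tilde f^b\circ T^{-a}$, shows that the measure of the strip between a leaf and its image equals $a-b\alpha$ for \emph{every} invariant probability measure (Lemma \ref{mainclaim2}), bounds the integrated winding distance by $8mb(a-b\alpha)$ (Lemma \ref{bound2}), and lets $a-b\alpha\to 0$. This is where the pseudo-rotation hypothesis does real work, and there is no shortcut through the Gambaudo--Ghys formula alone. Your closing remark about controlling the diagonal via $df_0$ is the right instinct — the paper indeed decomposes $\mu=t\mu'+(1-t)\delta_0$, $\nu=s\nu'+(1-s)\delta_0$ and treats the piece supported on $T^1_0\mathbb{D}\subset\partial K$ separately — but that is a secondary point once the integral identity is in hand.
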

\begin{defn}\label{defn:right} The isotopy $\tilde{f}$ is called a right-handed isotopy, if 
\begin{itemize}
\item for every $(x,y)\in \mathbb{O}(\mathbb{D})$ if holds that
\[\liminf_{n\to \infty}\frac{1}{n^2}\sum_{i=0}^{n-1}\sum_{j=0}^{n-1}W_{\tilde{f}}(f^{i}(x),f^{j}(y)) >0.\]
\item for every $x\in \mathrm{Per}(f)$ if holds that
\[\liminf_{n\to\infty}\frac{1}{n^2}\left(\sum_{\substack{i,j=0 \\ f^{i}(x)\neq f^{j}(x)}}^{n-1}W_{\tilde{f}}(f^{i}(x),f^{j}(x)) + \sum_{\substack{i,j=0 \\ f^{i}(x)= f^{j}(x)}}^{n-1}W_{\tilde{f}}(f^{i}(x),df^j(\xi))\right)>0.\]
\end{itemize}
\end{defn}
\begin{rem}
  The isotopy $\tilde{f}$ is called left-handed if both limits are negative in Definition \ref{defn:right}.
\end{rem}
In \cite{ghys2009right}, Ghys introduced right-handed flows and demonstrated that any finite collection of periodic orbits admits a global surface of section. Subsequently, in \cite{florio2021quantitative}, Florio and Hryniewicz provided a sufficient condition for Reeb flows to be right-handed. Prasad \cite{Prasad2022VolumepreservingRV} showed that volume-preserving right-handed flows are conformally equivalent to  Reeb flows. In her thesis \cite{lhuissier2018mathematical}, Lhuissier showed that definition \ref{defn:right} is equivalent to the definition of right-handedness due to Ghys if one looks at the suspension of the disk map as a flow (see \cite{lhuissier2018mathematical} Theorem 7.3.2).
\begin{cor}\label{right-hand}
   If $f:\mathbb{D} \to \mathbb{D}$ is a $C^{1}$ irrational pseudo-rotation that is $C^{1}$ conjugate to a rotation on the boundary, and $\tilde f$ is an area-preserving isotopy from the identity map $\mathrm{id}$ to $f$ such that the rotation number of the isotopy is (negative) positive, then the isotopy $\tilde f$ is (left-handed) right-handed.
\end{cor}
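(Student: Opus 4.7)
The plan is to check directly the two conditions of Definition \ref{defn:right}, assuming $\rho(\tilde f)>0$; the case $\rho(\tilde f)<0$ is identical with inequalities reversed. The first condition is immediate from Theorem \ref{asym-link}: for every $(x,y)\in \mathbb{O}(\mathbb{D})$, the average $n^{-2}\sum_{i,j=0}^{n-1}W_{\tilde f}(f^i(x),f^j(y))$ actually converges (uniformly in $(x,y)$) to $\rho(\tilde f)>0$, so in particular its $\liminf$ is positive.

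For the second condition I would use that $f$ is an irrational pseudo-rotation, whence $\mathrm{Per}(f)=\{0\}$, so it suffices to treat $x=0$. Since $f^{i}(0)=f^{j}(0)=0$ for every pair $(i,j)$, the first sum in Definition \ref{defn:right} is empty and the remaining sum collapses to
\[
\frac{1}{n^{2}}\sum_{i,j=0}^{n-1}W_{\tilde{f}}(0,df^{j}(\xi))=\frac{1}{n}\sum_{j=0}^{n-1}W_{\tilde{f}}(0,df^{j}(\xi))
\]
for a chosen unit tangent vector $\xi$ at the origin. The task is then to show that this Cesàro average tends to $\rho(\tilde f)$.

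The key idea is to recover the tangential values $W_{\tilde f}(0,df^{j}(\xi))$ from honest values $W_{\tilde f}(0,f^{j}(y))$ by letting $y$ approach $0$ along the direction $\xi$, via the continuous extension of $W_{\tilde f}$ to the blow-up $K$. I would fix a sequence $y_{k}\in \mathbb{D}\setminus\{0\}$ with $y_{k}\to 0$ and $y_{k}/\|y_{k}\|\to \xi$. Since $0$ is the unique periodic point, $(0,y_{k})\in \mathbb{O}(\mathbb{D})$ for every $k$, and the uniform convergence in Theorem \ref{asym-link} provides, for any $\varepsilon>0$, an integer $n_{0}$ (independent of $k$) such that
\[
\left|\frac{1}{n}\sum_{j=0}^{n-1}W_{\tilde{f}}(0,f^{j}(y_{k}))-\rho(\tilde{f})\right|<\varepsilon
\]
for every $n\geq n_{0}$ and every $k$. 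Holding $n$ fixed and letting $k\to\infty$, the continuity of $W_{\tilde f}$ on $K$ yields $W_{\tilde f}(0,f^{j}(y_{k}))\to W_{\tilde f}(0,df^{j}(\xi))$ term by term, because $f^{j}(y_{k})/\|f^{j}(y_{k})\|\to df^{j}(\xi)/\|df^{j}(\xi)\|$, so the same estimate passes to the limit and the desired convergence follows.

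The main delicate point is precisely this swap of limits: it relies essentially on the uniformity furnished by Theorem \ref{asym-link}, which gives a single $n_{0}$ that works simultaneously for all base pairs $(0,y_{k})$, combined with the continuity of the extension of $W_{\tilde f}$ to $K$ so that each finite Cesàro sum can be controlled as $k\to\infty$. Once these two ingredients are combined the second condition holds at $0$, and Corollary \ref{right-hand} follows; the left-handed case is symmetric.
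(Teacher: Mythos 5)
Your proof is correct, and the handling of the first condition is identical to the paper's. For the second condition, however, you take a genuinely different route. The paper reduces the second sum to $\frac{1}{n}\sum_{j=0}^{n-1}W_{\tilde f}(0,df^j(\xi))$ exactly as you do, but then simply identifies this Ces\`aro average with the linearized rotation number at the fixed point and quotes the result of Franks \cite{franks1988generalizations} (or the second author \cite{le2001rotation}) that for an irrational pseudo-rotation this linearized rotation number equals $\rho(\tilde f)$. You instead derive the value of the limit internally: you approximate the blow-up point $(0,df^j(\xi))$ by genuine pairs $(0,f^j(y_k))$ with $y_k\to 0$ along direction $\xi$, observe that each $(0,y_k)$ lies in $\mathbb{O}(\mathbb{D})$ since $0$ is the unique periodic point and $f^j$ is injective, and then exploit the \emph{uniformity} in Theorem \ref{asym-link} to get a single $n_0$ valid for all $k$, before passing $k\to\infty$ in each finite Ces\`aro sum via continuity of the extension of $W_{\tilde f}$ to the blow-up $K$ (using $C^1$-regularity of $f$ to ensure $f^j(y_k)/\|f^j(y_k)\|\to df^j_0(\xi)/\|df^j_0(\xi)\|$). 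The swap of limits you flag as the delicate point is indeed the crux, and it is justified precisely because Theorem \ref{asym-link} gives uniform rather than merely pointwise convergence. What your route buys is that Corollary \ref{right-hand} no longer needs a separate citation of the Franks/Le Calvez identification of the linearized rotation number; it follows from Theorem \ref{asym-link} together with the continuity of $W_{\tilde f}$ on $K$, both of which are already on the table. What the paper's route buys is brevity, since the needed fact is a standard consequence of the rotation set of an irrational pseudo-rotation being a single point. Both arguments ultimately trace back to the same external input (Theorem \ref{rotation} feeds into Theorem \ref{intmeas} and hence \ref{asym-link}), so neither is more elementary in substance, but yours makes the corollary a cleaner formal consequence of the theorem it accompanies.
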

Finally we proceed to prove Corollary \ref{right-hand} using Theorem \ref{asym-link}.
\begin{proof}[Proof of Corollary \ref{right-hand}] It is enough to prove the theorem in the case were the isotopy is right-handed. Assume that the  rotation number of the isotopy is positive. By using Theorem \ref{asym-link}, we can conclude that for every $(x,y)\in \mathbb{O}(\mathbb{D})$, the asymptotic orbit linking number is defined and equal to the rotation number and by assumption, it is positive. That is
\[
L_{\tilde{f}}^{\infty}(x,y)=\lim_{n\to \infty}\frac{\sum_{i=0}^{n-1}\sum_{j=0}^{n-1}W_{\tilde{f}}(f^{i}(x),f^{j}(y))}{n^{2}}=\rho(\tilde{f})>0
\]
for every $(x,y)\in \mathbb{O}(\mathbb{D})$. 

Now we check the second condition of Definition \ref{defn:right}. Since the fixed point $0$ of $f$ is the only periodic point of $f$, the sum in the second condition simplifies to the following sum:
$$\liminf_{n\to \infty}\frac{1}{n}\sum_{i=0}^{n-1}W_{\tilde{f}}(0,df^i(\xi)) >0.$$ 
But, this is the linearized rotation number of $0$ through the isotopy. By the classical result of Franks' \cite{franks1988generalizations} and also of the second author \cite{le2001rotation}, the linearized rotation number of the fixed point is also equal to $\rho(\tilde{f})$ hence it is also positive.

This completes the proof of Theorem \ref{right-hand}. 
\end{proof}
{\bf Structure of the article:} 
In Section 2, we recall basic notions relevant to the article, such as weak convergence of probability measures, radial foliations, and their properties. In Section 3, we introduce radial foliations and isotopies for irrational pseudo-rotations whose study has been developed by the second author in \cite{Cal}; then, we show that the measure between any leaf of a radial foliation and its image is equal to the rotation number of the map. By combining this with properties of the foliation, we show that the integral of the winding number with respect to the product of any two invariant probability measures is also equal to the rotation number. In Section 4, we use the integral of the winding number together with a Krylov-Bogolyubov argument to prove Theorem \ref{asymp-mean} and Theorem \ref{asym-link}.
 \subsection*{Acknowledgments} AP would like to express his gratitude to Barney Bramham for a lot of conversations and suggestions about asymptotic mean action. AP is also grateful to Alberto Abbondandolo, Umberto Hryniewicz, Marco Mazzuchelli, and Rohil Prasad for many stimulating conversations and
suggestions. The work of DBS is partially supported by the DFG SFB/TRR 191 ``Symplectic Structures in Geometry, Algebra and Dynamics'', Projektnummer 281071066-TRR 191. The work of AP is partially supported by the DFG Walter Benjamin Fellowship, Projektnummer 518128580, and the ANR CoSyDy ``Conformally Symplectic Dynamics beyond symplectic dynamics'' (ANR-CE40-0014). 
\section{Preliminaries}
In this section, we introduce necessary results in ergodic theory and also in the theory of radial foliations specifically in the case of irrational pseudo-rotations. In order to state several of the inequalities needed to prove the main theorems, we require different quantities coming from measurements of topological angles.
\subsection{Invariant measures and pointwise ergodic theorem.} Assume $f:\mathbb{D}\to\mathbb{D}$ is a homeomorphism. For $x\in \mathbb{D}$, let $\delta_x$ denote the Dirac measure supported at $x$. We will denote:
\[
\mu_{n}(x):=\frac{1}{n}\sum^{n-1}_{i=0}\delta_{f^{i}(x)}
\]
\begin{prop}\label{invmeas1} Let $(x_{k})_{k\geq 0}$ and $(y_{k})_{k\geq 0}$ be two sequences in $\mathbb{D}$ and $(n_{k})_{k\geq 0}$  an increasing sequence of positive integers. Then there exists a subsequence $(n_{k_{j}})_{j\geq 0}$ such that the sequences of measures $(\mu_{n_{k_{j}}}(x_{k_{j}}))_{j\geq 0}$ and $(\mu_{n_{k_{j}}}(y_{k_{j}}))_{j\geq 0}$ converge for the weak* topology and their limits are invariant probability measures of $f$.
\end{prop}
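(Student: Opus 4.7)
The statement is a standard Krylov--Bogolyubov type compactness result, adapted to produce simultaneous convergence for two sequences along a common subsequence of times. My plan is to combine weak* compactness of probability measures on the compact space $\mathbb{D}$ with the asymptotic invariance of the empirical averages $\mu_n$.

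First I would recall that, since $\mathbb{D}$ is a compact metric space, the space $\mathcal{P}(\mathbb{D})$ of Borel probability measures on $\mathbb{D}$ is metrizable and sequentially compact in the weak* topology (Banach--Alaoglu together with tightness being automatic on a compact space). Applying this to the sequence $(\mu_{n_k}(x_k))_{k\geq 0}$, I extract a subsequence $(k_i)$ along which it converges weakly to some $\mu \in \mathcal{P}(\mathbb{D})$. Then, applying the same compactness to $(\mu_{n_{k_i}}(y_{k_i}))_{i\geq 0}$, I extract a further subsequence $(k_{i_j}) =: (k_j)$ along which $(\mu_{n_{k_j}}(y_{k_j}))_j$ also converges to some $\nu \in \mathcal{P}(\mathbb{D})$. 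The first sequence still converges to $\mu$ along this subsubsequence, so we obtain simultaneous convergence along a single subsequence $(n_{k_j})$.

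It remains to check that both limit measures are $f$-invariant. For this I would use the standard telescoping identity: for any $x \in \mathbb{D}$ and any $n \geq 1$,
\[
f_* \mu_n(x) - \mu_n(x) = \frac{1}{n}\bigl(\delta_{f^n(x)} - \delta_x\bigr),
\]
so that for any continuous test function $\varphi \in C^0(\mathbb{D})$ one has $|\int \varphi \circ f \, d\mu_n(x) - \int \varphi \, d\mu_n(x)| \leq \frac{2\|\varphi\|_\infty}{n}$. Since $n_{k_j} \to \infty$, passing to the limit in this inequality along $n_{k_j}$ with $x = x_{k_j}$ (resp.\ $y_{k_j}$) yields $\int \varphi \circ f \, d\mu = \int \varphi \, d\mu$ (resp.\ for $\nu$), and since $\varphi$ was arbitrary, $\mu$ and $\nu$ are $f$-invariant.

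I do not expect any serious obstacle here: the only subtlety worth flagging is that the two empirical measures need to converge along a \emph{common} sequence of indices $n_{k_j}$, which is why the two successive subsequence extractions must be performed in this particular order rather than independently. The compactness of $\mathbb{D}$ is used crucially, both to get weak* sequential compactness and to guarantee that the difference $f_*\mu_n(x)-\mu_n(x)$ tests against continuous functions with a uniform bound depending only on $\|\varphi\|_\infty$.
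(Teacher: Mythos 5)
Your proof is correct and follows exactly the approach the paper indicates: weak* sequential compactness of probability measures on the compact disk, two successive subsequence extractions to get a common index sequence, and the telescoping estimate $|\int \varphi\circ f\,d\mu_n(x) - \int\varphi\,d\mu_n(x)| \leq 2\|\varphi\|_\infty/n$ to pass invariance to the limit. The paper does not spell out the details but explicitly describes it as a Krylov--Bogolyubov argument resting on these same two ingredients.
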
 
The proof of Proposition \ref{invmeas1} is very similar to the proof of the Krylov-Bogolyubov Theorem and is based on the compactness of the space of Borel probability measures furnished with the  weak$^*$ topology and the fact that every limit point of a sequence $(\mu_{n_{k}}(x_{k}))_{k\geq 0}$ is invariant.
The next result is straightforward.
\begin{prop} \label{weakcon}
   If sequences of probability measures $(\mu_k)_{k\geq 0}$ and $(\nu_k)_{k\geq 0}$ converge for the weak$^*$ topology to $\nu$ and $\mu$, then the sequence of probability measures $(\mu_k\times \nu_k)_{k\geq 0}$  converges for the weak$^*$ topology to $\nu\times\mu$.
\end{prop}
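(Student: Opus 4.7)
The plan is to verify weak$^{*}$ convergence by testing against an arbitrary continuous function $F\in C(\mathbb{D}\times\mathbb{D})$ and reducing to the case of product test functions. First I would treat separable test functions of the form $F(x,y)=g(x)h(y)$ with $g,h\in C(\mathbb{D})$. For such $F$, Fubini gives
\[
\int_{\mathbb{D}\times\mathbb{D}} g(x)h(y)\,d(\mu_k\times\nu_k)(x,y) \;=\; \left(\int_{\mathbb{D}} g\,d\mu_k\right)\left(\int_{\mathbb{D}} h\,d\nu_k\right).
\]
By the hypothesis of weak$^{*}$ convergence, each factor converges to the corresponding integral against $\mu$ and $\nu$, and since both factors are uniformly bounded (by $\|g\|_\infty$ and $\|h\|_\infty$), the product converges to $(\int g\,d\mu)(\int h\,d\nu)=\int g(x)h(y)\,d(\mu\times\nu)$. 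Linearity then extends this convergence to every finite linear combination $G(x,y)=\sum_{i=1}^{N} g_i(x)h_i(y)$.

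Next I would upgrade from such separable combinations to arbitrary $F\in C(\mathbb{D}\times\mathbb{D})$ using Stone--Weierstrass. The subalgebra $\mathcal{A}\subset C(\mathbb{D}\times\mathbb{D})$ generated by the functions $(x,y)\mapsto g(x)h(y)$ with $g,h\in C(\mathbb{D})$ contains the constants and separates the points of the compact Hausdorff space $\mathbb{D}\times\mathbb{D}$, hence is uniformly dense. Given $\varepsilon>0$, pick $G\in\mathcal{A}$ with $\|F-G\|_\infty<\varepsilon$. Since every $\mu_k\times\nu_k$ and $\mu\times\nu$ is a probability measure,
\[
\left|\int F\,d(\mu_k\times\nu_k)-\int F\,d(\mu\times\nu)\right|\;\leq\;2\varepsilon+\left|\int G\,d(\mu_k\times\nu_k)-\int G\,d(\mu\times\nu)\right|,
\]
and the last term tends to $0$ by the previous paragraph. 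Letting $k\to\infty$ and then $\varepsilon\to 0$ yields the desired convergence.

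The argument is entirely routine soft analysis and there is no real obstacle: the only non-trivial ingredient is the Stone--Weierstrass density of separable functions, which is a standard fact on products of compact Hausdorff spaces. This is why the authors simply label the statement as straightforward.
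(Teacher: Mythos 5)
Your proof is correct. The paper actually offers no proof at all—it simply labels the proposition as ``straightforward''—so there is nothing to compare against, but the argument you give (reduce to separable test functions $g(x)h(y)$ via Fubini, pass to finite sums by linearity, then use Stone--Weierstrass density in $C(\mathbb{D}\times\mathbb{D})$ together with the uniform total-mass bound to handle a general continuous test function) is the standard textbook proof and is exactly what the authors have in mind.

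One very minor remark: the observation that the two factors $\int g\,d\mu_k$ and $\int h\,d\nu_k$ are uniformly bounded is not actually needed, since a product of two convergent real sequences converges to the product of the limits regardless; the boundedness only becomes relevant if one wanted to handle, say, a double sequence indexed independently. Also note the paper's statement uses the somewhat confusing convention that $\mu_k\to\nu$ and $\nu_k\to\mu$; you quietly relabelled to the natural $\mu_k\to\mu$, $\nu_k\to\nu$, which of course changes nothing.
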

The following theorem is a direct implication of Templemann's pointwise ergodic theorem. For more information, please refer to Theorem 2.6 in Section 2.3.3 of \cite{sarig2009lecture}.
\begin{thm}
Assume that $\mu$ and $\nu$ are invariant probability measures of $f$. If $\phi\in L^{1}(\mathbb{D}\times \mathbb{D}, \mu\times \nu)$, then 
\[
S_{n}\phi(z):=\frac{1}{n^{2}}\sum_{i=0}^{n-1}\sum_{j=0}^{n-1}\phi(f^{i}(x),f^{j}(y))
\]
converges for $(\mu\times \nu)$-almost every $z=(x,y)\in \mathbb{D}\times \mathbb{D}$.
\end{thm}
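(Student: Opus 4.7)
The plan is to realize the double Birkhoff average as an average over a Følner exhaustion for a $\mathbb{Z}_{\geq 0}^2$-action, and then invoke Templemann's pointwise ergodic theorem directly. Concretely, consider the two commuting transformations
\[
T_1=f\times\mathrm{id},\qquad T_2=\mathrm{id}\times f
\]
on $\mathbb{D}\times\mathbb{D}$. Because $\mu$ and $\nu$ are both $f$-invariant, the product measure $\mu\times\nu$ is preserved by each $T_k$, so they generate a measure-preserving action of the abelian semigroup $\mathbb{Z}_{\geq 0}^2$ on $(\mathbb{D}\times\mathbb{D},\mu\times\nu)$. For $(i,j)\in\mathbb{Z}_{\geq 0}^2$ one has
\[
T_1^{i}T_2^{j}(x,y)=(f^{i}(x),f^{j}(y)),
\]
so the proposed average $S_n\phi(x,y)$ is exactly the spatial average of $\phi$ along the orbit-set $F_n\cdot(x,y)$ with $F_n=\{0,1,\dots,n-1\}\times\{0,1,\dots,n-1\}$.

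Next, I would note that the square boxes $(F_n)_{n\geq 1}$ form a tempered Følner sequence in $\mathbb{Z}_{\geq 0}^2$: $|F_n|=n^2$ grows polynomially while, for any fixed finite $K\subset\mathbb{Z}_{\geq 0}^2$, the symmetric difference $|(K+F_n)\triangle F_n|=O(n)$ is asymptotically negligible compared to $|F_n|$, and the tempered (Shulman) condition $\bigl|\bigcup_{k<n}F_k^{-1}F_n\bigr|\leq C|F_n|$ is an elementary estimate on nested cubes. Templemann's pointwise ergodic theorem (applied in the form cited in the paper, Theorem 2.6 of Section 2.3.3 of Sarig's lectures) therefore yields, for every $\phi\in L^{1}(\mathbb{D}\times\mathbb{D},\mu\times\nu)$, that
\[
\frac{1}{|F_n|}\sum_{(i,j)\in F_n}\phi\bigl(T_1^{i}T_2^{j}(x,y)\bigr)
=\frac{1}{n^{2}}\sum_{i=0}^{n-1}\sum_{j=0}^{n-1}\phi(f^{i}(x),f^{j}(y))
\]
converges $(\mu\times\nu)$-almost everywhere (to the conditional expectation of $\phi$ with respect to the $\sigma$-algebra of sets jointly invariant under $T_1$ and $T_2$; but since the statement asks only for convergence, the limit need not be identified).

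Since the statement is essentially a citation, I do not expect any serious obstacle: the only substantive point is the verification that the boxes $F_n$ are a tempered Følner sequence, which is a purely combinatorial exercise. If one preferred a more self-contained argument that avoids the full Templemann machinery, one could instead iterate the one-dimensional Birkhoff theorem twice: first apply it with respect to $\nu$ for $\mu$-a.e.\ $x$ to the fibre functions $y\mapsto\phi(f^{i}(x),y)$, use a uniform $L^{1}$-control to pass to $\frac{1}{n}\sum_{i}\frac{1}{n}\sum_{j}\phi(f^{i}(x),f^{j}(y))$, and then apply Birkhoff again with respect to $\mu$; Fubini for $\mu\times\nu$ makes the outer null sets combine cleanly. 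The Templemann route is shorter and is what the paper already references, so I would present the proof along those lines.
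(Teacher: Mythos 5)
Your main argument is exactly the paper's intended one: the paper gives no proof beyond citing Tempelman's pointwise ergodic theorem, and your realization of $S_n\phi$ as the average over the square Følner sets $\{0,\dots,n-1\}^2$ for the $\mathbb{Z}_{\geq 0}^2$-action generated by $f\times\mathrm{id}$ and $\mathrm{id}\times f$, preserving $\mu\times\nu$, is precisely how that citation is meant to be applied. The proposal is correct (the secondary ``iterate Birkhoff twice'' aside is shakier because of the interchange of limits and $x$-dependent null sets, but you rightly do not rely on it).
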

\subsection{Properties of radial foliations and topological angles}
Let us begin with some notations and definitions, specially regarding radial foliations on a disk. Let $D:=\{z\in\R^2\,\vert\enskip \Vert z\Vert <r\}$ be an open disk and set $D^*=D\setminus \{0\}$ the punctured disk, lastly let us write $\Tilde{D}$ for the universal covering space of $D^*$. We will consider the sets
$$W=\{(z,z')\in D^{*}\times D^{*} \ | \ z\neq z'\}$$
and
$$\tilde{W}=\{(\tilde z,\tilde z')\in \tilde{D}\times \tilde{D} \ | \ \tilde z\neq \tilde z'\}.$$
\begin{defn}
    A {\it radial foliation } is an oriented topological foliation of $D^*$ such that every leaf $\phi$ is a {\it ray}, meaning that it satisfies
    \[
    \alpha(\phi)=\{0\}, \qquad \omega(\phi) \subset \overline{D}\setminus D \quad\text{(here the closure is taken in } \mathbb{R}^2)    \]
    In other words, $\phi$ tends to $0$ in the past and to $\partial D$ in the future. We denote ${\mathfrak F}$ the set of radial foliations. The group $\mathrm{Homeo}_*(D^*)$ of homeomorphisms of $D^*$ that are isotopic to the identity, acts naturally on ${\mathfrak F}$: if $\mathcal F\in\mathfrak F$ and $f\in\mathrm{Homeo}_*(D^*)$, then the foliation $f(\mathcal F)$, whose leaves are the images by $f$ of the leaves of $\mathcal F$, is a radial foliation. Moreover, $\mathrm{Homeo}_*(D^*)$ acts transitively on $\mathfrak F$.  If $\mathrm{Homeo}_*(D^*)$ is endowed with the $C^0$ topology, meaning the compact-open topology applied to maps and the inverse maps, the stabilizer $\mathrm{Homeo}_{*,\mathcal F}(D^*)$ of $\mathcal F\in \mathfrak F$ is a closed subgroup of  $\mathrm{Homeo}_*(D^*)$. Consequently, the map
\[
\begin{aligned}\mathrm{Homeo}_*(D^*)/\mathrm{Homeo}_{*,\mathcal F}(D^*)&\to {\mathfrak F}\\ f \,\mathrm{Homeo}_{*,\mathcal F}(D^*)&\mapsto f(\mathcal F)
 \end{aligned}
 \]
 is bijective and ${\mathfrak F}$ can be furnished with a natural $C^0$ topology. It is the topology, induced from the quotient topology defined on $\mathrm{Homeo}_*(D^*)/\mathrm{Homeo}_{*,\mathcal F}(D^*)$ by this identification map. Note that this topology does not depend on $\mathcal F$.
 It is well known that the fundamental groups of $\mathrm{Homeo}_*(D^*)$ and $\mathrm{Homeo}_{*,\mathcal F}(D^*)$ are infinite cyclic and that the morphism $i_* :\pi_1(\mathrm{Homeo}_{*,\mathcal F}(D^*), \mathrm{Id})\to \pi_1(\mathrm{Homeo}_*(D^*), \mathrm{Id})$  induced by the inclusion map $i:  \mathrm{Homeo}_{*,\mathcal F}(D^*)\to \mathrm{Homeo}_*(D^*)$ is bijective. It implies that ${\mathfrak F}$ is simply connected. 
In the whole article, when $\mathcal F$ is a radial foliation, the notation $\tilde{\mathcal F}$ means the lift of $\mathcal F$ to the universal covering space $\tilde D$. Note that $\tilde{\mathcal F}$ is a trivial foliation. In particular, every leaf $\tilde \phi$ of $\tilde{\mathcal F}$  is an oriented line of $\tilde D$ that separates $\tilde D$ into two connected open sets, the component of  $\tilde D\setminus\tilde\phi$ lying on the left of $\tilde \phi$ will be denoted $L(\tilde \phi)$ and the component lying on the right will be denoted $R(\tilde \phi)$. We get a total order $\preceq_{\mathcal F}$ on the set of leaves of $\tilde {\mathcal F}$ as follows:
$$\tilde \phi \preceq_{\mathcal F}  \tilde \phi'\Longleftrightarrow R(\tilde\phi)\subset R(\tilde\phi').$$
We can also define a partial order $\leq_{ {\mathcal F}}$ on $\tilde D$: denote $\tilde\phi_{\tilde z}$ the leaf of $\tilde{\mathcal F}$ that contains $\tilde z$ and write $\tilde z<_{{\mathcal F}}\tilde z'$ if $\tilde z\not =\tilde z'$, if $\tilde\phi_{\tilde z}=\tilde\phi_{\tilde z'}$ and if the segment of $\tilde\phi_{\tilde z}$ beginning at $\tilde z$ and ending at $\tilde z'$ inherits the orientation of  $\tilde\phi_{\tilde z}$.
\end{defn}
The above formalism of radial foliations allows us to work with a certain ``discretization of angles", these ideas were introduced by the second author in \cite{Cal} and further explained in \cite{calvez2023twist} with specific applications to twist maps. For the purpose of this article we will introduce only the necessary definitions without stepping into all of the many properties of this technique. We now introduce topological angles, these serve as a bridge between the measurement given by the winding number \ref{star} and the information captured by radial foliations.\\\\
Consider the natural projection
\[
\begin{aligned}\pi: \Z&\to \Z/4\Z\\
 k&\mapsto \dot k=k+4\Z.
 \end{aligned}
 \]
 If $\Z/4\Z$ is endowed with the topology whose open sets are
\[
\emptyset, \{\dot 1\}, \{\dot 3\}, \{\dot 1, \dot 3\}, \{\dot 1, \dot 2, \dot 3\}, \{\dot 3, \dot 0,\dot 1\},\{\dot 0,\dot 1, \dot 2, \, \dot 3\},
\]
and $\Z$ with the topology generated
 by the sets $2k+1$ and $\{2k+1, 2k, 2k+1\}$, $k\in\Z$, then $\pi$ is a covering map\footnote{This topology is usually called the {\it digital line topology} on $\Z$. It is also known as {\it Khalimsky topology}}. Note that both sets
$\Z/4\Z$ and $\Z$ are non Hausdorff but path connected. 
If $k$, $l$ are two integers, we will define
\[
\lambda(k,l)=\begin{dcases}0 &\text{ if $k=l$,}\\
\#(k,l)\cap 4\Z +  (\#\{k,l\}\cap 4\Z)/2 &\text { if $k<l$,}\\
-\#(l,k)\cap 4\Z - (\#\{l,k\}\cap 4\Z)/2 &\text{ if $k>l$.}\end{dcases}
\]
 Note that for every integers $k, l, m$ we have 
 \[
\lambda(k,l)+\lambda(l,m)=\lambda(k,m).
 \]
The quantity $\lambda(k,l)$ measures the ``algebraic intersection number'' of a continuous path $\gamma: [s_0,s_1]\to \Z/4\Z$ with $\{\dot 0\}$, if $\gamma$ is lifted to a path $\hat\gamma: [s_0,s_1]\to\Z$ joining $k$ to $l$. 
For every $(\tilde z, \tilde z')\in \tilde W$, we can define a continuous function $\dot{\theta}_{\tilde z, \tilde z'}: {\mathfrak F}\to \Z/4\Z$ as follows:
\[
\dot{\theta}_{\tilde z, \tilde z'}( \mathcal F)= \begin{cases} \dot 0 &\text{ if $\tilde z'>_{\mathcal F}\tilde z$,}\\
\dot 1 &\text{ if $\phi_{\tilde z'} \succ_{\mathcal F} \phi_{\tilde z}$,}\\
 \dot 2 &\text{ if  $\tilde z'<_{\mathcal F}\tilde z$,}\\
 \dot  3 &\text{ if $\phi_{\tilde z'} \prec_{\mathcal F} \phi_{\tilde z}$.}\end{cases}
 \]
 The space $\mathfrak F$ being simply connected, the Lifting Theorem asserts that there exists a continuous function, 
$\theta_{\tilde z, \tilde z'}: {\mathfrak F}\to \Z$, uniquely defined up to an additive constant in $4\Z$, such that $\pi\circ \theta_{\tilde z, \tilde z'}= \dot{\theta}_{\tilde z, \tilde z'}.$
Note that
\begin{itemize}
\item $\dot{\theta}_{\tilde z', \tilde z}( \mathcal F) =\dot{\theta}_{\tilde z, \tilde z'}( \mathcal F)+\dot 2$, for every $\mathcal F\in \mathfrak F$,
\item $\theta_{\tilde z, \tilde z'}$ and $\theta_{\tilde z', \tilde z}$ can be chosen such that $\theta_{\tilde z', \tilde z}( \mathcal F) =\theta_{\tilde z, \tilde z'}( \mathcal F)+2$ for every $\mathcal F\in \mathfrak F$,
\item  $\dot{\theta}_{\tilde f(\tilde z), \tilde f(\tilde z')}( f(\mathcal F)) =\dot{\theta}_{\tilde z, \tilde z'}( \mathcal F) $, for every $\mathcal F\in \mathfrak F$ and every $f\in\mathrm{Homeo}_*(D^*)$,
\item $\theta_{\tilde f(\tilde z), \tilde f(\tilde z')}$ and $\theta_{\tilde z, \tilde z'}$ can be chosen such that $\theta_{\tilde f(\tilde z), \tilde f(\tilde z')}( f(\mathcal F)) =\theta_{\tilde z, \tilde z'}( \mathcal F) $, for every $\mathcal F\in \mathfrak F$.
\end{itemize}

In particular, if $\mathcal{F}$ and $\mathcal{F}'$ are two radial foliations, the numbers
\[
{\tau}(\tilde z,\tilde z',\mathcal{F},\mathcal{F}'):={\theta}_{ \tilde z,\tilde z'}(\mathcal{F}')-{\theta}_{\tilde z,\tilde z'}(\mathcal{F})
\]
and 
\[
\lambda(\tilde z,\tilde z',\mathcal{F},\mathcal{F}'):=\lambda({\theta}_{\tilde z,\tilde z'}(\mathcal{F}'),{\theta}_{\tilde z,\tilde z'}(\mathcal{F}))
\]
do not depend on the choice of the lift ${\theta}$.

We can define a pseudo-distance on $\mathfrak F$, by writing $d(\mathcal
F, \mathcal F')=\sup_{(\tilde z, \tilde z')\in\tilde W}  \tau(\tilde
z,\tilde z', \mathcal F, \mathcal {F'})\in [0,+\infty]$. We call it the {\it winding distance}.
\subsubsection{ Topological angles in the annulus}
Let $\mathcal F$ be a radial foliation and $f$ a homeomorphism of $D^*$ isotopic to the identity. Let $I=(f_s)_{s\in[0,1]}$ be an identity isotopy of $f$ in $\mathrm{Homeo}_*(D^*)$ and $\tilde I=(\tilde f_s)_{s\in[0,1]}$ the lifted identity isotopy to $\tilde D$.  
The function
$$s\in[0,1]\mapsto \dot{\theta}_{\tilde f_s(\tilde z), \tilde f_s(\tilde z')} (\mathcal F)=\dot{\theta}_{\tilde z,\tilde z'} ( f_s^{-1} (\mathcal F))\in \Z/4\Z$$ can be lifted to a function $$s\in[0,1]\mapsto \theta_{\tilde z,\tilde z'} ( f_s^{-1} (\mathcal F))\in\Z$$ and the difference between the value at $1$ and the value at $0$ of this last map is nothing but $ \tau (\tilde z, \tilde z, {\mathcal F}, f^{-1}(\mathcal F))$. Of course, it depends neither on the choice of $I$, nor on the choice of the lift $\tilde I$ of $I$. Suppose now that $\tilde z$ and $\tilde z'$ project onto two different points of $D^*$. Note that if $k$ is large enough, then for every $s\in[0,1]$, it holds that
$$\dot{\theta}_{\tilde f_s(\tilde z), \tilde f_s(T^k(\tilde z'))} (\mathcal F)=\dot 1, \enskip \dot{\theta}_{\tilde f_s(\tilde z), \tilde f_s(T^{-k}(\tilde z'))} (\mathcal F)=\dot 3,$$ and so the functions
$$ s\mapsto \theta_{ \tilde z,  T^k(\tilde z')} (f_s^{-1}(\mathcal F)), \enskip  s\mapsto  \theta_{\tilde z, T^{-k}(\tilde z')}(f_s^{-1}(\mathcal F))$$ are constant,  which means that 
$$ \tau (\tilde z, T^k(\tilde z'), \mathcal F,f^{-1}( {\mathcal F}) )= \tau (\tilde z, T^{-k}(\tilde z'), \mathcal F, f^{-1}({\mathcal F})) =0.$$
Consequently, if $\mathcal F$ and $\mathcal F'$ are two radial foliations, then for every $(z,z')\in W$, one can define
\[
\begin{aligned} \overline{ \tau} (z, z', \mathcal F, {\mathcal F}') &:=\sum_{k\in\Z} \vert  \tau (\tilde z, T^k(\tilde z'), \mathcal F, {\mathcal F}') \vert,\\
  \tau (z, z', \mathcal F, {\mathcal F}') &:=\sum_{k\in\Z} \tau (\tilde z, T^k(\tilde z'), \mathcal F, {\mathcal F}'), \\
\lambda(z, z', \mathcal F, {\mathcal F}') &:=\sum_{k\in\Z} \lambda(\tilde z, T^k(\tilde z'), \mathcal F, {\mathcal F}') ,\end{aligned}
\]
each sum being independent of the choice of lifts $\tilde z, \tilde z'$ of $z, z'$. These identities implies the following inequality
\begin{equation}\label{inequality}
\vert\lambda(z,z', \mathcal F, \mathcal F') \vert \leq\overline{\tau}(z,z', \mathcal F, \mathcal F').
\end{equation}
Now let us define what is a {\it displacement function}. For every $f\in\mathrm{Homeo}_*(D^*)$, every lift $\tilde f$ of $f$ to $\tilde D$ and every ray $\phi$ we can define a function $m_{\tilde f, \phi}: D^*\to\Z$ as follows:
we choose a lift $\tilde \phi$ of $\phi$, then for every $z\in D$, we consider the lift $\tilde z$ of $z$ such that $\tilde z\in\overline {L(\tilde\phi)}\cap R(T(\tilde\phi))$ and we denote $m_{\tilde f, \phi}(z)$ the integer $m$ such that  $\tilde f(\tilde z)\in\overline {L(T^m(\tilde\phi))}\cap R(T^{m+1}(\tilde\phi))$, noting that it does not depend on the choice of $\tilde \phi$. Observe that $m_{T^k\circ\tilde f,\phi}= m_{\tilde f,\phi}+k$, for every $k\in\Z$.

Denote:
$$\lambda_{f, \mathcal{F}}(z,z'):=\lambda(z,z',\mathcal{F}, f^{-1}(\mathcal{F})),$$
\begin{equation}\label{equality}
 \Lambda_{\tilde{f},\mathcal{F},\phi}(z,z'):=\lambda_{f, \mathcal{F}}(z,z')+m_{\tilde{f},\phi}(z).
\end{equation}
\begin{defn} \enskip Fix $f\in \mathrm{Homeo}_*({D}^{*}) $ and a lift $\tilde f$ to $\tilde {{D}}$.  Say that $z\in {D}^*$ has a {\it rotation number} $\mathrm{rot}_{\tilde f}(z)\in\R$ if: 
\begin{enumerate}
\item there exists a compact set $K\subset {D}^*$ such that $\#\{n\geq 0\,\vert \, f^n(z)\in K\}=+\infty$;
\item if $\phi$ is a ray and if $K\subset {D}^*$ is a compact set containing $z$, then for every $\varepsilon>0$, there exists $n_0\geq 0$ such that for every $n\geq n_0$ it holds that
$$ f^n(z)\in K \Rightarrow \left\vert {\frac{1}{n}}\sum_{i=0}^{n-1} m_{\tilde f, \phi} (f^i(z)) -\mathrm{rot}_{\tilde f}(z)\right \vert \leq \varepsilon.$$
\end{enumerate}
\end{defn}
 \begin{rem}
The notion $\mathrm{rot}_{\tilde f}(z)$  does not depend neither on the choice of the ray, nor on the choice of the foliation. Both of these claims follow from the fact that for every compact set $K\subset D^*$ the difference in $m_{\tilde{f},\phi}(z)$ is bounded when changing either the ray of the foliation or the foliation itself (see Lemma 2.5 in \cite{Cal}).
\end{rem}
It is worth noticing the following identities, which directly follow from the definition:
\begin{align}\label{equality4}
  \sum_{i=0}^{n-1} m_{\tilde f, \phi}(f^i(z))= m_{\tilde f^n, \phi}(z),  \\
   \sum_{i=0}^{n-1} \lambda_{ f, \mathcal{F}}(f^i(z))= \lambda_{f^n, \mathcal{F}}(z),  \\
    \sum_{i=0}^{n-1} \Lambda_{\tilde f, \mathcal{F},\phi}(f^i(z))= \Lambda_{\tilde f^n, \mathcal{F},\phi}(z). 
\end{align}
In the next proposition, we consider the Euclidean radial foliation $\mathcal{F}_{*}$ on $D^{*}$, whose
leaves are the paths $$(0, 1) \ni t \to te^{2 i\pi\alpha}, \ \   \alpha \in [0, 1).$$
The second author proved the following properties in \cite{Cal}.
\begin{prop}\label{property}
Suppose that $(z,z')\in W$ and $\phi\in\mathcal{F}_{*}$. The following inequalities hold.
\begin{enumerate}
\item $    |m_{\tilde{f}^{n},\phi}(z)-W_{\tilde{f}^{n}}(0,z)|\leq 1$,
    \item $   |\Lambda_{\tilde f^{n},\mathcal{F}_{*},\phi}(z,z')-W_{\tilde f^{n}}(z,z')|\leq 2$.
\end{enumerate}
where the function $W_{\tilde{f}^{n}}$ is defined on $D\times D$ with the same formula as defined in Definition \ref{DefWindingNumber} on $\mathbb{D}\times \mathbb{D}$.
\end{prop}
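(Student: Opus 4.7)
The plan is to pass to the universal cover $\tilde D$ of $D^*$, on which the deck transformation $T$ acts by adding $2\pi$ to the lifted angular coordinate $\tilde\theta$. Since $0$ is fixed, in these coordinates the winding number takes the clean form $W_{\tilde f^n}(0,z)=(\tilde\theta_{\tilde f^n(\tilde z)}-\tilde\theta_{\tilde z})/(2\pi)$ for any lift $\tilde z$ of $z$. Both inequalities will follow by showing that the discretized angle counts $m_{\tilde f^n,\phi}$ and $\lambda_{f^n,\mathcal{F}_{*}}$ differ from the genuine continuous winding numbers by small discretization defects bounded by $1$, so that their sum $\Lambda$ differs from $W_{\tilde f^n}(z,z')$ by at most $2$.

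For (1), a ray $\phi\in\mathcal{F}_{*}$ at angle $\alpha_0$ lifts to the vertical line $\tilde\phi=\{\tilde\theta=\alpha_0\}$, and the strip $\overline{L(\tilde\phi)}\cap R(T(\tilde\phi))$ is exactly $\{\tilde\theta\in[\alpha_0,\alpha_0+2\pi)\}$. Choosing the lift $\tilde z$ with $\tilde\theta_{\tilde z}\in[\alpha_0,\alpha_0+2\pi)$ and writing $\tilde\theta_{\tilde z}=\alpha_0+2\pi\beta_0$ and $\tilde\theta_{\tilde f^n(\tilde z)}=\alpha_0+2\pi(m+\beta_1)$ with $\beta_0,\beta_1\in[0,1)$, the definition of $m$ gives $m_{\tilde f^n,\phi}(z)=m$, while $W_{\tilde f^n}(0,z)=m+\beta_1-\beta_0$. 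Hence $|m_{\tilde f^n,\phi}(z)-W_{\tilde f^n}(0,z)|=|\beta_0-\beta_1|<1$.

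For (2), combine the identity $\Lambda_{\tilde f^n,\mathcal{F}_{*},\phi}=\lambda_{f^n,\mathcal{F}_{*}}+m_{\tilde f^n,\phi}$ from (\ref{equality}) with (1) to reduce the task to proving $|\lambda_{f^n,\mathcal{F}_{*}}(z,z')-(W_{\tilde f^n}(z,z')-W_{\tilde f^n}(0,z))|\leq 1$. Unwinding $\lambda_{f^n,\mathcal{F}_{*}}(z,z')=\sum_{k}\lambda(\tilde z,T^{k}(\tilde z'),\mathcal{F}_{*},f^{-n}(\mathcal{F}_{*}))$ via covariance, each summand measures signed $\dot 0$-crossings of the pair $(\tilde f_t^n(\tilde z),\tilde f_t^n(T^{k}(\tilde z')))$ along the isotopy. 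Such a crossing corresponds in $D^*$ to an instant $t$ when $f_t^n(z)$ and $f_t^n(z')$ are collinear with the origin with $f_t^n(z')$ strictly farther out, which is precisely when the vector $v_t:=f_t^n(z')-f_t^n(z)$ points in the same direction as $f_t^n(z)$ seen from $0$. Letting $\tilde\alpha_t,\tilde\beta_t\in\R$ be continuous lifts of the angles of $v_t$ and of $f_t^n(z)$ respectively, such instants are exactly the times with $\tilde\alpha_t-\tilde\beta_t\in 2\pi\Z$. A signed-crossings count then yields
\[
\sum_k\lambda(\tilde z,T^{k}(\tilde z'),\mathcal{F}_{*},f^{-n}(\mathcal{F}_{*}))=\frac{(\tilde\alpha_1-\tilde\beta_1)-(\tilde\alpha_0-\tilde\beta_0)}{2\pi}+\varepsilon
\]
with $|\varepsilon|\leq 1$, whose right side equals $W_{\tilde f^n}(z,z')-W_{\tilde f^n}(0,z)+\varepsilon$.

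The main obstacle is carrying out this signed-crossings accounting cleanly. One has to match the $\dot 0$-crossings (and not the $\dot 2$-crossings) with the $v_t\parallel f_t^n(z)$ condition using the radial-ordering test, pin down the sign convention coming from the definition of $\lambda(k,l)$ in the digital-line topology on $\Z$, and control the half-integer boundary contributions that appear when an endpoint configuration already lies on the $\dot 0$-locus. These endpoint half-integers, added to the error from (1), are what forces the bound in part (2) to be $2$ rather than something tighter.
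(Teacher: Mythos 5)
The paper does not prove this proposition; it cites it directly from \cite{Cal}, so there is no internal proof to compare against. Judging your proposal on its own merits:

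Your argument for part (1) is complete and correct. With the Euclidean foliation $\mathcal F_*$, the fundamental strip $\overline{L(\tilde\phi)}\cap R(T(\tilde\phi))$ is a $2\pi$-wide angular band, the lift $\tilde f^n(\tilde z)$ tracks a continuous branch of the polar angle, so $W_{\tilde f^n}(0,z)=(\tilde\theta_{\tilde f^n(\tilde z)}-\tilde\theta_{\tilde z})/2\pi$, and your $m+\beta_1-\beta_0$ computation gives $|m_{\tilde f^n,\phi}(z)-W_{\tilde f^n}(0,z)|=|\beta_0-\beta_1|<1$. The reduction of (2) to the single inequality $|\lambda_{f^n,\mathcal F_*}(z,z')-(W_{\tilde f^n}(z,z')-W_{\tilde f^n}(0,z))|\le 1$ via $\Lambda=\lambda+m$ and the triangle inequality is also correct, and the geometric identification of the $\dot 0$-locus with the times when $f_t^n(z')-f_t^n(z)$ is a positive radial multiple of $f_t^n(z)$ (equivalently $\tilde\alpha_t-\tilde\beta_t\in 2\pi\Z$) is right.

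However, the inequality that actually carries part (2) --- ``a signed-crossings count then yields \dots with $|\varepsilon|\le 1$'' --- is asserted, not proved, and you yourself flag it as ``the main obstacle.'' This is a genuine gap rather than a deferred routine check: one must show (i) that $\sum_k\lambda(\tilde z,T^k\tilde z',\mathcal F_*,f^{-n}(\mathcal F_*))$, defined via lifts of $\dot\theta$ to the digital line $\Z$, coincides with the algebraic intersection number of $t\mapsto g_t:=\tilde\alpha_t-\tilde\beta_t$ with $2\pi\Z$ under a single coherent sign convention matching the orientation of $\mathcal F_*$, (ii) that passages through $\dot 2$ (same ray, $z'$ closer) are excluded and that no spurious contributions arise when $g_t$ touches $2\pi\Z$ tangentially or on a nondegenerate interval, and (iii) that the two endpoint terms together with the integer/half-integer adjustments in the definition of $\lambda(k,l)$ give $|\varepsilon|\le 1$, not $\le 3/2$ or worse. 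Until this accounting lemma is proved, part (2) is not established; what you have is a correct and useful strategy, with the decisive estimate missing.
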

\begin{defn}
    Fix $f\in \mathrm{Homeo}_{*}(D^{*})$ and a lift $\tilde{f}$ to $\tilde{D}$. Consider $\mu$ an $f$-invariant probability measure on $\D^*$,we say that $\mu$ has a rotation number $\mathrm{rot}_{\tilde{f}}(\mu) \in \mathbb{R}$ if $\mu$-almost every point $z$
has a rotation number and the function $\mathrm{rot}_{\tilde{f}}$ is $\mu$-integrable. In this case
we set
$$\mathrm{rot}_{\tilde{f}}(\mu):=\int_{{D}^{*}}\mathrm{rot}_{\tilde{f}}~d \mu.$$
\end{defn}
The following theorem is an application of property (1) of Proposition \ref{property} and equality (\ref{equality4}), which tells about the rotation number of the invariant measure.
\begin{thm}\label{integral1}
    Suppose that $f\in \Diff^{1}({D})$ fixes $0$. Let $I$ be an identity isotopy of $f$ and let $\tilde{f}$ be the lift of $f\big|_{{D}^{*}}$ to $\tilde{D}^*$ naturally defined by $\tilde I$. If $\mathcal {F} \in \mathfrak F$ is a radial foliation
and $\phi$ is a leaf of $\mathcal F$ such that $m_{\tilde f,\phi}$ is $\mu$-integrable, for some  invariant probability measure $\mu$ such that $\mu(\{0\})=0$,
then it holds that
\begin{equation}
    \mathrm{rot}_{\tilde{f}}(\mu)=\int_{{D}^{*}}W_{\tilde{f}}(0,z)~d\mu(z)=\int_{{D}^{*}}m_{\tilde{f},\phi}(z)~d\mu(z).
\end{equation}
\end{thm}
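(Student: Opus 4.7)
The plan is to identify all three quantities with a common Birkhoff time average of $m_{\tilde f,\phi}$, using the cocycle identity \eqref{equality4} and the comparison estimate of Proposition \ref{property}(1) as the two bridges. First I would apply the Birkhoff pointwise ergodic theorem to the $\mu$-integrable observable $m_{\tilde f,\phi}$: there exists a measurable function $m^*$ defined $\mu$-a.e.\ with
\[
m^*(z)=\lim_{n\to\infty}\frac{1}{n}\sum_{i=0}^{n-1} m_{\tilde f,\phi}(f^i(z)),\qquad \int_{D^*} m^*\,d\mu=\int_{D^*} m_{\tilde f,\phi}\,d\mu.
\]
By \eqref{equality4}, the partial sum on the right equals $m_{\tilde f^n,\phi}(z)$, so one also has $m^*(z)=\lim_{n\to\infty} m_{\tilde f^n,\phi}(z)/n$. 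The hypothesis $\mu(\{0\})=0$ combined with Poincar\'e recurrence guarantees that $\mu$-a.e.\ $z\in D^*$ is a recurrent point whose orbit revisits any small compact neighborhood of $z$ inside $D^*$ infinitely often, which handles clause (1) of the rotation-number definition; clause (2) is then automatic because convergence of the full Birkhoff average forces convergence along every subsequence of return times. Hence $\mathrm{rot}_{\tilde f}(z)=m^*(z)$ for $\mu$-a.e.\ $z$, and consequently $\mathrm{rot}_{\tilde f}(\mu)=\int m_{\tilde f,\phi}\,d\mu$.

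For the identification with the winding-number integral, Proposition \ref{property}(1) with $n=1$ yields $|W_{\tilde f}(0,z)|\leq |m_{\tilde f,\phi}(z)|+1$, so $W_{\tilde f}(0,\cdot)$ is $\mu$-integrable. Dividing the same inequality for general $n$ by $n$ gives $\lim_{n\to\infty} W_{\tilde f^n}(0,z)/n=m^*(z)$ $\mu$-a.e. On the other hand, the standard cocycle identity for winding numbers along the concatenated isotopy from $\mathrm{id}$ to $f^n$, together with the fact that $f(0)=0$, gives
\[
W_{\tilde f^n}(0,z)=\sum_{i=0}^{n-1} W_{\tilde f}(f^i(0),f^i(z))=\sum_{i=0}^{n-1} W_{\tilde f}(0,f^i(z)).
\]
Applying Birkhoff once more to the integrable observable $W_{\tilde f}(0,\cdot)$ and comparing with the previous displayed limit, one obtains $\int W_{\tilde f}(0,\cdot)\,d\mu=\int m^*\,d\mu$, which closes the chain of equalities.

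The main subtle point is reconciling Birkhoff's pointwise convergence with the apparently more restrictive definition of rotation number, which only constrains averages along returns to a compact set containing $z$. This is precisely where the assumption $\mu(\{0\})=0$ enters: via Poincar\'e recurrence it ensures that $\mu$-a.e.\ orbit does not accumulate at the singular fixed point and therefore satisfies the recurrence clause of the definition automatically. Beyond this, the argument is a straightforward orchestration of the two cocycle identities and the uniform bound in Proposition \ref{property}(1).
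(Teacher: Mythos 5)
Your proof follows the route the paper itself points to (Birkhoff's pointwise theorem, the cocycle identity~\eqref{equality4}, Proposition~\ref{property}(1), and Poincar\'e recurrence), and the overall structure is sound. The paper gives no written proof, only a one-line hint, so your fleshing-out is essentially the intended argument. There is, however, one genuine gap you should close. Proposition~\ref{property}(1) is stated only for $\phi \in \mathcal{F}_*$, the Euclidean radial foliation, whereas Theorem~\ref{integral1} allows $\phi$ to be a leaf of an arbitrary radial foliation $\mathcal F$. You invoke Proposition~\ref{property}(1) directly with this arbitrary $\phi$ both to deduce integrability of $W_{\tilde f}(0,\cdot)$ and to obtain $W_{\tilde f^n}(0,z)/n \to m^*(z)$; neither step is licensed as written. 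The fix is standard but needs to be said: first, $W_{\tilde f}(0,\cdot)$ is $\mu$-integrable because $W_{\tilde f}$ extends continuously to the compact blow-up $K$ of the diagonal, hence is bounded (this is already established in the paper and does not use Proposition~\ref{property} at all); second, apply Proposition~\ref{property}(1) to a Euclidean leaf $\phi^*$ to get $\lim_n W_{\tilde f^n}(0,z)/n = \lim_n m_{\tilde f^n,\phi^*}(z)/n$; third, pass from $\phi^*$ to the given $\phi$ using the fact (Lemma~2.5 of \cite{Cal}, quoted in the paper's remark after the rotation-number definition) that for every compact $K\subset D^*$ the difference $m_{\tilde f^n,\phi}(z)-m_{\tilde f^n,\phi^*}(z)$ is bounded when $z$ and $f^n(z)$ lie in $K$, so that by Poincar\'e recurrence the two normalized sequences have the same limit $\mu$-a.e.

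The same bounded-difference-on-compacts remark is also needed to justify your dismissal of clause~(2) of the rotation-number definition as ``automatic'': that clause is quantified over \emph{all} rays $\phi'$, whereas Birkhoff only hands you convergence of the averages of $m_{\tilde f,\phi}$ for the given $\phi$. The passage to all other rays along return times again uses Lemma~2.5 of \cite{Cal} and recurrence. With those two points supplied, the argument is complete and matches the paper's intent; the cocycle identity $W_{\tilde f^n}(0,z) = \sum_{i=0}^{n-1} W_{\tilde f}(0,f^i(z))$ you use (valid because $0$ is fixed) is correct and is indeed the right bridge to apply Birkhoff a second time to $W_{\tilde f}(0,\cdot)$.
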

The following theorem is a generalization of Theorem \cite{Cal}. The proof uses inequality (2) of Proposition \ref{property}.
 \begin{thm}\label{integral}
    Suppose that $f\in \Diff^{1}({D})$ fixes $0$. Let $I$ be an identity isotopy of $f$ and let $\tilde{f}$ be the lift of $f\big|_{{D}^{*}}$ naturally defined by $\tilde I$. If $\mathcal{F} \in \mathfrak{F}$ is a radial foliation
and $\phi$ is a leaf of $\mathcal{F}$ such that $m_{\tilde f,\phi}$ is $\eta_{1}$ and $\eta_{2}$-integrable and $\lambda_{f,\mathcal{F}}$ is $\eta$-integrable,
then it holds that
\begin{equation}\label{meanrot}
    \int_{W}W_{\tilde{f}}(z,z')~d\eta=  \int_{W}\Lambda_{\tilde{f},\mathcal{F},\phi}(z,z')~d\eta.
\end{equation}
Here, $\eta$ is an invariant probability measure on $D\times D$, and $\eta_{1}$ and $\eta_{2}$ are the projections of $\eta$ onto the first and second factors, respectively such that $\eta(\Delta)=0$ and $\eta_{1}(\{0\})=\eta_{2}(\{0\})=0$.
\end{thm}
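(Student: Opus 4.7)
The approach is a telescoping argument: both the winding number $W_{\tilde f^n}$ and the combinatorial angle $\Lambda_{\tilde f^n, \mathcal{F}, \phi}$ of the $n$-th iterate decompose into Birkhoff sums along the diagonal action of $f$ on $D\times D$, their pointwise difference is uniformly bounded in $n$ by Proposition \ref{property}(2), and averaging against the invariant measure $\eta$ and dividing by $n$ kills this bounded discrepancy in the limit.

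First I would establish two pointwise additivity identities
\[
W_{\tilde f^n}(z,z') = \sum_{i=0}^{n-1} W_{\tilde f}\bigl(f^i(z), f^i(z')\bigr), \qquad \Lambda_{\tilde f^n, \mathcal{F}, \phi}(z,z') = \sum_{i=0}^{n-1} \Lambda_{\tilde f, \mathcal{F}, \phi}\bigl(f^i(z), f^i(z')\bigr).
\]
The first follows by choosing as isotopy from $\mathrm{Id}$ to $f^n$ the concatenation of $n$ reparametrized copies of $I$ and tracking the angle $\theta(t)$ in \eqref{star}; the second is the pairwise analogue of the third identity in \eqref{equality4}, read through the definition $\Lambda = \lambda_{f,\mathcal{F}} + m_{\tilde f, \phi}$.

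Next I would apply Proposition \ref{property}(2) to the iterate $\tilde f^n$, giving the uniform estimate
\[
\bigl|\Lambda_{\tilde f^n, \mathcal{F}, \phi}(z,z') - W_{\tilde f^n}(z,z')\bigr| \leq 2
\]
for every $(z,z') \in W$. The integrands $W_{\tilde f}$ and $\Lambda_{\tilde f, \mathcal{F}, \phi}$ are both $\eta$-integrable: $W_{\tilde f}$ is bounded on $W$ and $\eta(\Delta)=0$, while $\Lambda$ is the sum of the $\eta$-integrable function $\lambda_{f,\mathcal{F}}$ and the function $(z,z')\mapsto m_{\tilde f,\phi}(z)$, which is $\eta$-integrable by the hypothesis that $m_{\tilde f, \phi}$ is $\eta_1$-integrable (using $\eta_1(\{0\})=\eta_2(\{0\})=0$). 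Using the $(f\times f)$-invariance of $\eta$ to collapse each Birkhoff sum to $n$ times its base integral, integrating the pointwise bound yields
\[
\left| n \int_W \Lambda_{\tilde f, \mathcal{F}, \phi}\, d\eta - n \int_W W_{\tilde f}\, d\eta \right| \leq 2.
\]
Dividing by $n$ and letting $n\to\infty$ produces \eqref{meanrot}.

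The main obstacle I anticipate lies in the second step. Proposition \ref{property}(2) is formulated specifically for the Euclidean foliation $\mathcal{F}_*$ and a leaf $\phi \in \mathcal{F}_*$, whereas Theorem \ref{integral} permits an arbitrary radial foliation $\mathcal{F}$. To bridge this gap, one can either verify that the bound persists (possibly with a larger constant) for general $\mathcal{F}$, or first prove the equality for the pair $(\mathcal{F}_*, \phi_*)$ and then show that $\int_W \Lambda_{\tilde f, \mathcal{F}, \phi}\, d\eta$ does not depend on the choice of $(\mathcal{F}, \phi)$. The latter reduction would rest on the boundedness of the differences $\lambda(z,z',\mathcal{F},\mathcal{F}')$ and $m_{\tilde f, \phi} - m_{\tilde f, \phi'}$ on compact subsets of $D^*$ (cf.\ Lemma 2.5 of \cite{Cal}), combined with the integrability hypotheses on $\eta$, $\eta_1$ and $\eta_2$.
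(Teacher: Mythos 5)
Your proof is correct and rests on the same key ingredient as the paper's own argument, namely the uniform estimate $\vert\Lambda_{\tilde f^n,\mathcal F,\phi}-W_{\tilde f^n}\vert\leq 2$ from Proposition \ref{property}(2) together with the cocycle identities in (\ref{equality4}); but you exploit that estimate by a genuinely different, and somewhat more elementary, mechanism. The paper's proof invokes Birkhoff's pointwise ergodic theorem twice: first to obtain $\eta$-a.e.\ convergence of $\Lambda_{\tilde f^n,\mathcal F,\phi}(z,z')/n$, then, after transferring that convergence to $W_{\tilde f^n}(z,z')/n$ via the uniform bound, once more to turn the a.e.\ equality of pointwise limits into the equality of integrals. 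You instead integrate the pointwise bound directly against $\eta$, use the $(f\times f)$-invariance of $\eta$ to collapse $\int_W\Lambda_{\tilde f^n,\mathcal F,\phi}\,d\eta$ and $\int_W W_{\tilde f^n}\,d\eta$ to $n$ times their one-step integrals, and send $n\to\infty$; this bypasses the ergodic theorem entirely while using the same integrability hypotheses. You are also right to flag the one subtlety that the paper's proof leaves tacit: Proposition \ref{property}(2) is stated only for the Euclidean foliation $\mathcal F_*$ and a leaf $\phi\in\mathcal F_*$, while Theorem \ref{integral} allows an arbitrary radial foliation $\mathcal F$ (and is indeed later applied with $\mathcal F=\mathcal F_1$ in the proof of Theorem \ref{intmeas}). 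The paper cites the proposition for a general $\mathcal F$ without comment, so the reduction you sketch --- prove the identity for $(\mathcal F_*,\phi_*)$ and then show that $\int_W\Lambda_{\tilde f,\mathcal F,\phi}\,d\eta$ is independent of the pair $(\mathcal F,\phi)$, using the boundedness of the foliation/ray change discrepancies on compact sets (Lemma 2.5 of \cite{Cal}) --- is precisely the extra step one would want in order to make either argument fully explicit.
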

\begin{proof}
  By the assumptions of the theorem and Birkhoff's theorem, the sequence $\frac{m_{\tilde f^n,\phi}(z)}{n}$ converges $\eta_1$-almost everywhere and also $\eta_{2}$-almost everywhere. Moreover, the sequence
    $$\frac{\lambda_{f^n,\mathcal{F}}(z,z')}{n}$$ 
    converges $\eta$-almost everywhere on $W$. Furthermore, we deduce that the sequence $$\frac{\Lambda_{\tilde f^n,\mathcal{F},\phi}(z, z')}{n}$$
    converges $\eta$-almost everywhere. Consequently, by inequality (2) of Proposition \ref{property}, the same holds for the sequence
    $$\frac{W_{\tilde f^{n}}(z, z')}{n}$$ which converges to the same value as $$\frac{\Lambda_{\tilde f^n,\mathcal{F},\phi}(z, z')}{n}.$$ Again, by Birkhoff's theorem, we deduce the equality (\ref{meanrot}).
\end{proof}
\begin{rem}
     Recall that integrals $  \int_{W}W_{\tilde{f}}(z,z')~d\eta$ and $  \int_{W}\Lambda_{\tilde{f},\mathcal{F},\phi}(z,z')~d\eta$ depend only on the homotopy class of the isotopy with fixed ends.
\end{rem}
The following theorem is an implication of Franks result \cite{franks1988generalizations}. We could also deduce it from the more general result of the second author \cite{le2001rotation}.
\begin{thm}\label{rotation}
    If $f:\mathbb{D}\to \mathbb{D}$ is an irrational pseudo-rotation on $\mathbb{D}$, then for any invariant probability measure $\mu$ such that $\mu(\{0\})=0$  we have 
\begin{equation}\label{rotationnumber}
\int_{\mathbb{D}}W_{\tilde{f}}(0,z)~d\mu(z)=\rho(\tilde f).
\end{equation}
Where $\rho(\tilde f)$ is the rotation number of the irrational pseudo-rotation $f$.
\end{thm}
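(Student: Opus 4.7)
The plan is to apply Theorem~\ref{integral1} to rewrite $\int W_{\tilde f}(0,z)\,d\mu(z)$ as the rotation number $\mathrm{rot}_{\tilde f}(\mu)$, and then invoke Franks' generalized Poincar\'e--Birkhoff theorem \cite{franks1988generalizations} to identify this quantity with $\rho(\tilde f)$. First I would verify the integrability hypothesis of Theorem~\ref{integral1}. Pick any $\phi\in\mathcal{F}_*$; since $f$ is $C^{1}$, fixes $0$ with continuous differential $df(0)\in\mathrm{SL}(2,\mathbb{R})$, and coincides (after conjugation) with a rigid rotation on a neighborhood of $\partial\mathbb{D}$, the displacement function $m_{\tilde f,\phi}:\mathbb{D}^{*}\to\mathbb{Z}$ is bounded on $\mathbb{D}^{*}$: it is eventually constant near the boundary and controlled near $0$ by the linearization. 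Hence $m_{\tilde f,\phi}\in L^{1}(\mu)$ for every invariant probability measure $\mu$ with $\mu(\{0\})=0$, and Theorem~\ref{integral1} yields
\[
\int_{\mathbb{D}}W_{\tilde f}(0,z)\,d\mu(z)=\mathrm{rot}_{\tilde f}(\mu).
\]

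It then remains to prove $\mathrm{rot}_{\tilde f}(\mu)=\rho(\tilde f)$. The normalized Lebesgue measure $\mu_{\partial}$ on $\partial\mathbb{D}$ is $f$-invariant, since $f|_{\partial\mathbb{D}}$ is conjugate to the rigid rotation of angle $\rho(\tilde f)$; evaluating the same formula $\mathrm{rot}_{\tilde f}(\mu_{\partial})=\int m_{\tilde f,\phi}\,d\mu_{\partial}$ on the boundary identifies $\mathrm{rot}_{\tilde f}(\mu_{\partial})=\rho(\tilde f)$. Assume for contradiction that $\mathrm{rot}_{\tilde f}(\mu)\neq \rho(\tilde f)$ and choose any rational $p/q$ strictly between these two real numbers. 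Viewing $f$ as an area-preserving homeomorphism of the open annulus $\mathbb{D}^{*}=\mathbb{D}\setminus\{0\}$, Franks' theorem applied to the pair of invariant probability measures $\mu$ and $\mu_{\partial}$, which have distinct rotation numbers, produces a periodic orbit in $\mathbb{D}^{*}$ of rotation number $p/q$. Any such orbit is automatically disjoint from $\{0\}$, contradicting $\#\mathrm{Per}(f)=1$. Therefore $\mathrm{rot}_{\tilde f}(\mu)=\rho(\tilde f)$, completing the argument.

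The main obstacle is invoking Franks' theorem in a version that accepts rotation numbers coming from arbitrary $f$-invariant probability measures in the interior of the annulus rather than only from the two boundary components. This is exactly the generalized statement established in \cite{franks1988generalizations}, and it is also subsumed by the second author's more general rotation-number result \cite{le2001rotation}; both references appear immediately before the statement of Theorem~\ref{rotation}, so this step amounts to an application of the technology already cited. The remaining care is purely bookkeeping, namely checking boundedness of $m_{\tilde f,\phi}$ (which uses the $C^{1}$ and boundary-rotation hypotheses) and identifying $\mathrm{rot}_{\tilde f}(\mu_{\partial})=\rho(\tilde f)$ from the boundary dynamics.
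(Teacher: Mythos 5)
Your proposal is correct and follows the route the paper itself indicates: the paper gives no proof of Theorem~\ref{rotation}, stating only that it is an implication of Franks' generalized Poincar\'e--Birkhoff theorem \cite{franks1988generalizations} (or of \cite{le2001rotation}). Your argument --- checking that $m_{\tilde f,\phi}$ is bounded (hence $\mu$-integrable) using the $C^1$ hypothesis near $0$ and compactness away from $0$, applying Theorem~\ref{integral1} to rewrite the integral as $\mathrm{rot}_{\tilde f}(\mu)$, and then ruling out $\mathrm{rot}_{\tilde f}(\mu)\neq\rho(\tilde f)$ by producing a forbidden interior periodic orbit via Franks --- is exactly that deduction spelled out.
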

\section{Radial foliations for irrational pseudo-rotations and their properties} Let $f$ be an irrational pseudo rotation and $\tilde f$ an identity
isotopy of $f$. If $f$ is extended to the whole plane, the isotopy
$\tilde f$ defines a natural lift of $f_{{\mathbb R}^2\setminus\{0\}}$
to the universal covering space of ${\mathbb {R}^2\setminus\{0\}}$, also
denoted $\tilde f$.  If $T$ is the natural generator of the group of
covering transformations, every map $\tilde f^b\circ T^{-a}$, $b\in
\mathbb N\setminus\{0\}$, $a\in \mathbb Z$ is associated to a certain
identity isotopy of $f^b$ denoted $\tilde f^b\circ T^{-a}$.

In this section, we will explain how after suitably extending $f$ to the
whole plane, we can construct for every couple $(a,b)$ an invariant disk
$\mathbb{D}_{a/b}$ containing $\mathbb D$ and a radial foliation $\mathcal F$
on $\D_{a/b}$ such that:
\begin{itemize}[label=-]
    \item the winding distance between $\mathcal F$ and $f^{-b}(\mathcal F)$ is
bounded by $Mb$, where $M$ depends only on $f$,
\item every leaf of the lifted foliation $\mathcal F$ to the universal
covering space of $\D_{a/b}\setminus\{0\}$ is sent on its right by
$\tilde f^b\circ T^{-a}$.
\end{itemize}
We will use the fact that the rotation number of a probability measure
$\mu$ invariant by the original map $f$ is equal to the measure of the
strip between a leaf of $\tilde {\mathcal {F}}$ and its image by $\tilde f^b\circ
T^{-a}$
and the bound above concerning foliations to give an explicit upper
bound of the integral of the winding function of $\tilde f^b\circ
T^{-a}$ with respect to the product of any two invariant  probability
measures. Using this upper bound and using a sequence of convergents of
$\tilde \alpha$, we will deduce that the measure of the winding function
of $\tilde f$ with respect to the product of these two invariant 
probability measures is equal to $\rho(\tilde f)$.

Assume $f$ is a $C^{1}$ is an irrational pseudo-rotation of $\mathbb{D}$ that fixes the center $0=(0,0)$ of $\mathbb{D}$
and coincides with a rotation $R_{\alpha}$ on $\partial \mathbb{D}$. Fix $\beta> \alpha$. We can extend our map to a
homeomorphism of the whole plane (also denoted $f$) such that:
\[ f(x) = \begin{cases} 
    R_{\alpha+|z|-1}(z) & \text{if }  1\leq |z|\leq 1+\beta-\alpha, \\
     R_{\beta} & \text{if } |z|\geq 1+\beta-\alpha. 
   \end{cases}\]
We suppose moreover than $\beta\notin \mathbb{Q}$ and that $(\alpha, \beta)\cap \mathbb{Z} =\emptyset$. The extension $f$ is a piecewise $C^{1}$ area preserving transformation that satisfies the following properties:
\begin{itemize}
    \item $0$ is the unique fixed point of $f$;
    \item there is no periodic point of period $b$ if $(b\alpha, b\beta) \cap \mathbb{Z} = \emptyset$;
    \item if  $(b\alpha, b\beta) \cap \mathbb{Z} \neq \emptyset$, the set of periodic points of period $b$ can be written $\cup_{\alpha<a/b<\beta} S_{a/b}$, where $S_{a/b}$ is the circle of center $0$ and radius $1 + a/b-\alpha$. 
\end{itemize}
We denote the disk centered at $0$ and with radius disk $1 + a/b-\alpha$ by $\mathbb{D}_{a/b}$. 

Using generating functions and the decomposition of $f$ in $m$ maps ``close to the identity'', $m\geq 1$, the second author constructed a radial foliation $\mathcal{F}_{1}$ on $\mathbb D_{a/b}^* =\mathbb D_{a/b}\setminus\{0\}$ and an isotopy ${I}=({f}_{s})_{s\in [0,mb]}$ from $\mathrm{id}$ to $f^{b}$ for every $b$ such that $f^{b}|_{S_{a/b}}=\mathrm{id}$ and $f^{b}(z)\neq z$ if $1\leq |z|< 1+a/b-\alpha$.  The isotopy ${I}$ fixes each point of $S_{a/b}$. We do not provide the construction in detail, but instead utilize the properties of the foliation and the isotopy. For more details, please refer to \cite{Cal}. 
\begin{prop}\cite[Corollary ~3.13]{Cal}\label{cor} For every $z,z'\in  \mathbb{D}^{*}_{a/b}$ the following holds:
\begin{equation*}
   {\tau}(\tilde{z},\tilde{z}',\mathcal{F}_{1},f^{-b}(\mathcal{F}_{1}))\leq 4mb.
\end{equation*}
\end{prop}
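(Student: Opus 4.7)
The plan is to interpret $\tau(\tilde z,\tilde z',\mathcal F_1,f^{-b}(\mathcal F_1))$ as the total variation along the isotopy $I=(f_s)_{s\in[0,mb]}$ of a continuous integer-valued lift of the topological angle, and then to bound this variation subinterval by subinterval using the fact that each of the $mb$ elementary factors composing $f^b$ is close to the identity. Concretely, once lifts $\tilde z,\tilde z'\in \tilde{\mathbb D}^{*}_{a/b}$ are fixed, the continuous function
$$s\in[0,mb]\mapsto \dot\theta_{\tilde z,\tilde z'}(f_s^{-1}(\mathcal F_1))=\dot\theta_{\tilde f_s(\tilde z),\tilde f_s(\tilde z')}(\mathcal F_1)\in\mathbb Z/4\mathbb Z$$
admits a unique continuous lift $\Theta:[0,mb]\to\mathbb Z$ with $\Theta(0)=\theta_{\tilde z,\tilde z'}(\mathcal F_1)$, and directly from the definition of $\tau$ recalled in the preliminaries,
$$\tau(\tilde z,\tilde z',\mathcal F_1,f^{-b}(\mathcal F_1))=\Theta(mb)-\Theta(0).$$

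Next, I would partition $[0,mb]$ into the $mb$ unit intervals $[k,k+1]$, each corresponding to one of the $m$ close-to-identity factors in the decomposition of $f$ (traversed $b$ times along the isotopy). The quantitative input I would quote from \cite{Cal} is that for each such factor the canonical identity sub-isotopy cannot make any pair of distinct points revolve around each other by more than one full turn; since a full turn corresponds to a jump of exactly $4$ in any continuous integer lift of $\dot\theta$ (the kernel of $\pi:\mathbb Z\to\mathbb Z/4\mathbb Z$ being $4\mathbb Z$), this yields
$$\Theta(k+1)-\Theta(k)\leq 4\qquad\text{for every }k=0,\ldots,mb-1.$$
Telescoping over $k$ then gives $\Theta(mb)-\Theta(0)\leq 4mb$, which is exactly the pointwise bound asserted for the pair $(z,z')$.

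The main obstacle is the per-factor estimate $\Theta(k+1)-\Theta(k)\leq 4$. Everything else is formal bookkeeping about continuous lifts to $\mathbb Z/4\mathbb Z$ along an isotopy, but the ``at most one full turn per factor'' statement relies specifically on the generating-function construction of the decomposition: each factor is a map in a twist-like chart whose graph remains a graph, which forces the canonical identity sub-isotopy to stay $C^0$-close enough to the identity that no pair of distinct points can perform a full revolution. This is precisely the content that makes the decomposition into ``close to the identity'' factors effective for angle estimates, and it is the ingredient I would cite from \cite{Cal} rather than reprove; once it is in hand, Steps 1 and 3 above are automatic.
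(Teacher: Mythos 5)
The paper does not prove this proposition: it is stated verbatim as a citation of Corollary~3.13 in \cite{Cal}, so there is no internal proof to compare your argument against. That said, your reconstruction is structurally consistent with the framework the paper itself sets up: the identity $\tau(\tilde z,\tilde z',\mathcal F_1, f^{-b}(\mathcal F_1))=\Theta(mb)-\Theta(0)$, where $\Theta$ is the integer lift of $s\mapsto \dot\theta_{\tilde z,\tilde z'}(f_s^{-1}(\mathcal F_1))$, is exactly the reinterpretation of $\tau$ recalled in Section~2, and telescoping over unit sub-intervals corresponding to the generating-function factors is the natural way to exploit the decomposition.

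Two cautions are worth flagging. First, your entire argument rests on the per-factor estimate, which you (reasonably) also import from \cite{Cal}; since the proposition as a whole is already an external citation, the sketch does not lessen the reliance on \cite{Cal} --- it only makes explicit what the imported estimate must assert. Second, and more substantively, there is a bookkeeping discrepancy you should reconcile before asserting ``$\Theta(k+1)-\Theta(k)\leq 4$'' as the per-factor input. Elsewhere in this paper (Proposition~\ref{brouwer} and Lemma~\ref{mainclaim2}) the isotopy is parametrized over $[0,2mb]$ with time-$2mb$ lift $\tilde f_{2mb}=\tilde f^b\circ T^{-a}$, reflecting a finer subdivision in the generating-function construction than the $m$ factors you count. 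If the relevant isotopy has $2mb$ unit steps rather than $mb$, your telescoping needs the sharper per-step bound $\Theta(k+1)-\Theta(k)\leq 2$ (a half turn), not $\leq 4$ (a full turn), to land on $4mb$. The final estimate is the same, but the local statement you attribute to \cite{Cal} should be matched to the actual number of elementary sub-isotopies before citing it.
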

\begin{prop}\cite[Proposition ~3.14]{Cal}\label{brouwer}
If $\tilde{{I}} =(\tilde{{f}}_{s})_{s\in[0,2mb]}$ is the identity isotopy on the universal covering space $\tilde {\mathbb D}_{a/b}$ of $\mathbb D_{a/b}^*$ that lifts $({f}_{s}\big|_{\mathbb{D}^{*}_{a/b}})_{s\in [0,2mb]}$, then every leaf $\tilde{\phi}$ of $\tilde{\mathcal{F}_{1}}$ is a Brouwer line of $\tilde{{f}}_{2mb}$. That is $\overline{R(\tilde{{f}}_{2mb}(\tilde{\phi}_{\tilde{z}}))}\subset R(\tilde{\phi}_{\tilde{z}})$ for every $\tilde{z}\in \mathbb{D}^{*}_{a/b}$.
\end{prop}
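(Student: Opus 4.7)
My approach rests on the explicit generating-function construction of $\mathcal{F}_1$ and $I$ from \cite{Cal}; the task is to assemble its step-by-step features into the Brouwer property. First I would recall the setup: after extending $f$ to the plane as above, one writes $f = h_1 \circ \cdots \circ h_m$ as a composition of $m$ area-preserving maps $C^1$-close to the identity on $\mathbb{D}_{a/b}$, each $h_i$ carrying a generating function whose Hamiltonian interpolation produces a canonical isotopy $(h_{i,s})_{s \in [0,1]}$ from $\mathrm{id}$ to $h_i$. Concatenating these isotopies over one period of $f^b$ produces $I=(f_s)_{s\in[0,mb]}$, and concatenating twice yields the extension $(f_s)_{s \in[0,2mb]}$ from $\mathrm{id}$ to $f^{2b}$. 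The foliation $\mathcal{F}_1$ is built simultaneously so that, at each intermediate stage, its leaves are transverse to and lie to the left of the infinitesimal displacement vector induced by the current small map.

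Next I would lift to $\tilde{\mathbb{D}}_{a/b}$ and track the action on leaves. The key step-by-step fact, built into the construction, is that along $\tilde I$ each leaf $\tilde\phi$ of $\tilde{\mathcal{F}_1}$ is pushed weakly to the right: for every $\tilde z \in \tilde\phi$, the path $s \mapsto \tilde f_s(\tilde z)$ does not cross $\tilde\phi$ from $R(\tilde\phi)$ into $L(\tilde\phi)$. Running this through all of $[0, 2mb]$ already yields $\tilde f_{2mb}(\tilde\phi) \subset \overline{R(\tilde\phi)}$. To upgrade to the strict closure condition $\overline{R(\tilde f_{2mb}(\tilde\phi))} \subset R(\tilde\phi)$, I would use that the lift of the time-$mb$ map is $\tilde f^b \circ T^{-a}$, so on the preimage of the invariant circle $S_{a/b}$, where $f^b = \mathrm{id}$, it acts as the nontrivial translation $T^{-a}$, strictly separating the endpoints of leaves. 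A second period $[mb, 2mb]$ then adds another such translation, ensuring that the separation persists when closures are taken.

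The hard part will be promoting the weak stepwise pushing into the strict, global Brouwer inclusion, especially at the two ``ends'' of a leaf: the cluster behaviour near the puncture lift and along the preimage of $\partial \mathbb{D}_{a/b}$. Near the puncture one relies on the fact that $0$ is the unique fixed point of $f^b$ and on the product-like structure of $\mathcal{F}_1$ nearby, so that consecutive leaves get strictly separated after two full periods. Along the preimage of $\partial \mathbb{D}_{a/b}$, the strict separation is precisely what the doubling from $mb$ to $2mb$ provides, via the translation argument above. In the compact interior, I would combine the pointwise rightward pushing with the compactness of segments of $\tilde\phi$ lying between successive $T$-orbits, turning ``never crossing from the right'' into the closed-set inclusion required by the definition of a Brouwer line.
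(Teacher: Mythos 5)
The paper does not prove Proposition \ref{brouwer}; it is imported verbatim from \cite[Proposition~3.14]{Cal}, so there is no in-paper proof to match yours against. That said, two assertions in your plan contradict facts the paper itself records about $\tilde f_{2mb}$, and they undermine precisely the part of your argument you identify as hard.

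You take the isotopy on $[0,2mb]$ to be the isotopy to $f^b$ concatenated with itself, ending at $f^{2b}$. But in the proof of Lemma~\ref{mainclaim2} the paper explicitly identifies $\tilde f_{2mb}=\tilde f^b\circ T^{-a}$ as a lift of $f^b$ itself; the parameter length $2mb$ reflects the fineness of the decomposition used in \cite{Cal}, not a doubling of the end map. More critically, your strictness argument near $S_{a/b}$ rests on $\tilde f_{2mb}$ restricting to ``the nontrivial translation $T^{-a}$'' on the preimage of $S_{a/b}$. This is the opposite of what happens: on $S_{a/b}$, $f$ is the rigid rotation by $2\pi a/b$, so the canonical lift $\tilde f$ there is translation by $2\pi a/b$, hence $\tilde f^b=T^a$ on that circle, and $\tilde f_{2mb}=\tilde f^b\circ T^{-a}$ fixes the preimage of $S_{a/b}$ pointwise, exactly as the paper states (``the lift of $f^b$ that fixes the points of the preimage of $S_{a/b}$''). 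That fixed circle is precisely why it must be excluded from the domain on which the Brouwer property is asserted; there is no residual translation at the boundary end of a leaf to force strict separation. Since your treatment near the puncture and in the compact interior is only sketched, the strict inclusion $\overline{R(\tilde f_{2mb}(\tilde\phi))}\subset R(\tilde\phi)$ is not established, and the boundary mechanism you propose cannot be repaired without replacing it by the actual content of the generating-function construction of $\mathcal F_1$ in \cite{Cal}, where the transversality of the leaves to each factor of the decomposition (rather than a translation at $S_{a/b}$) is what yields strictness.
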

We will prove now the following result. Such kind of result was established in \cite{Cal}, but with the Lebesgue measure on the whole disk $D_{a/b}$.
\begin{lem}\label{mainclaim2}
   We fix $z\in \mathbb{D}^{*}_{a/b}$  and choose a lift $\tilde{z}\in \widetilde{\mathbb{D}}_{a/b}$. Now we define 
   \[O(\tilde{z}):=\overline{R(\tilde{{f}}_{2mb}^{-1}(\tilde{\phi}_{\tilde{z}}))}\cap L(\tilde{\phi}_{\tilde{z}}).\]
   For every invariant probability measure $\mu$ such that $\mu(\{0\})=0$ and $\mu(\mathbb{D})=1$,  we have
 \begin{equation}\label{measurebound}
        \tilde{\mu}(O(\tilde{z}))=a-b\alpha.
 \end{equation}
 Here, the measure $\tilde{\mu}$ is a lift of $\mu$.
\end{lem}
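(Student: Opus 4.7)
My plan is to express $\tilde{\mu}(O(\tilde z))$ as a signed count of leaf-crossings and identify this count with a displacement function, then apply the rotation-number machinery already built in Theorems~\ref{integral1} and~\ref{rotation}.

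The first step is to pin down the lift $\tilde f_{2mb}$. Because $f_{s}$ fixes every point of $S_{a/b}$ for all $s$, the lift $\tilde f_{2mb}$ must be the unique lift of $f^{b}$ that is the identity on $\widetilde S_{a/b}$; this forces $\tilde f_{2mb}=\tilde f^{b}\circ T^{-a}$. Consequently its translation number on $\partial\widetilde{\mathbb{D}}$ equals $b\alpha-a$, which is \emph{negative} because $\alpha<a/b$. Applying Theorem~\ref{integral1} and Theorem~\ref{rotation} to this lift gives
\[
\int_{\mathbb{D}} m_{\tilde f_{2mb},\phi}\,d\mu \;=\; \mathrm{rot}_{\tilde f_{2mb}}(\mu) \;=\; b\rho(\tilde f)-a \;=\; b\alpha-a.
\]

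Next I would use the $T$-invariance of $\tilde\mu$. Fix the fundamental domain $\tilde F=\overline{L(\tilde\phi)}\cap R(T\tilde\phi)$, for which $\tilde\mu(\tilde F)=1$ (since $\pi|_{\tilde F}$ identifies $\tilde\mu|_{\tilde F}$ with $\mu$, and $\mu(\mathbb{D})=1$). Partition $\widetilde{\mathbb{D}}_{a/b}^{*}=\bigsqcup_{k\in\mathbb{Z}}T^{-k}\tilde F$ and move $T^{k}$ across:
\[
\tilde\mu\bigl(O(\tilde z)\bigr) \;=\; \sum_{k\in\mathbb{Z}}\tilde\mu\bigl(T^{k}O(\tilde z)\cap \tilde F\bigr) \;=\; \int_{\tilde F} N(\tilde w)\,d\tilde\mu(\tilde w),
\]
where $N(\tilde w):=\#\{k\in\mathbb{Z}:\tilde w\in T^{k}O(\tilde z)\}$. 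Unwinding the definition, $\tilde w\in T^{k}O(\tilde z)$ is equivalent to $\tilde w\in L(T^{k}\tilde\phi)$ together with $\tilde f_{2mb}(\tilde w)\in\overline{R(T^{k}\tilde\phi)}$, so $N(\tilde w)$ simply counts the integer-translates of $\tilde\phi$ that separate $\tilde w$ from $\tilde f_{2mb}(\tilde w)$.

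Finally I would check that $N(\tilde w)=-m_{\tilde f_{2mb},\phi}(\pi\tilde w)$ for $\tilde\mu$-almost every $\tilde w\in\tilde F$. By definition $m_{\tilde f_{2mb},\phi}(\pi\tilde w)$ is the integer $m$ with $\tilde f_{2mb}(\tilde w)\in \overline{L(T^{m}\tilde\phi)}\cap R(T^{m+1}\tilde\phi)$, and both quantities count the same intermediate leaves $T^{k}\tilde\phi$; the sign flip reflects precisely that $\tilde f_{2mb}$ translates in the direction opposite to $T$ (a consequence of $b\alpha-a<0$). The exceptional set, where $\tilde f_{2mb}(\tilde w)$ lies on some $T^{k}\tilde\phi$, is a countable union of leaves and hence $\tilde\mu$-null since $\mu(\{0\})=0$. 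Combining everything:
\[
\tilde\mu(O(\tilde z)) \;=\; -\int_{\tilde F} m_{\tilde f_{2mb},\phi}\circ\pi \,d\tilde\mu \;=\; -\int_{\mathbb{D}} m_{\tilde f_{2mb},\phi}\,d\mu \;=\; -(b\alpha-a) \;=\; a-b\alpha.
\]

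The main delicate point I expect is the sign bookkeeping in the identity $N=-m_{\tilde f_{2mb},\phi}$: one has to combine the Brouwer property (Proposition~\ref{brouwer}) with the opposite-direction translation of $\tilde f_{2mb}$ versus $T$, and the hypothesis $\alpha<a/b$ is essential precisely to make both sides of the identity non-negative.
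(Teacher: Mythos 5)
Your first half — expressing $\tilde\mu(O(\tilde z))$ as a signed count of leaf-crossings via the decomposition $\tilde\mu(O(\tilde z))=\sum_{k}\tilde\mu(T^{k}O(\tilde z)\cap\tilde F)$ and identifying the integrand with $-m_{\tilde f_{2mb},\phi}$ — matches the paper's own computation (the paper phrases it as $\tilde\mu(O(\tilde z))=-\int(\pi_1\circ\tilde f_{2mb}-\pi_1)\,d\mu$, and $\pi_1\circ\tilde f_{2mb}-\pi_1$ is exactly $m_{\tilde f_{2mb},\phi}$ descended to $\mathbb{D}^*_{a/b}$). Two small remarks on it: your claim that the exceptional set is $\tilde\mu$-null ``since $\mu(\{0\})=0$'' is not justified; a countable union of leaves of $\mathcal{F}_1$ need not be $\mu$-null for a general invariant $\mu$ (absence of atoms at $0$ says nothing about singular mass on a curve). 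The identity in the integral still holds, but the correct reason is a cancellation: the $-1$ discrepancies on $\phi$ and the $+1$ discrepancies on $f^{-b}(\phi)$ contribute $-\mu(\phi)+\mu(f^{-b}(\phi))=0$ by $f^b$-invariance.

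The real gap is in the last step, where you invoke Theorem~\ref{integral1} to conclude $\int m_{\tilde f_{2mb},\phi}\,d\mu=\mathrm{rot}_{\tilde f_{2mb}}(\mu)$. Theorem~\ref{integral1} carries the hypothesis that $m_{\tilde f,\phi}$ is $\mu$-integrable, and you never verify this. For the Euclidean foliation $\mathcal{F}_*$ this is automatic (Proposition~\ref{property}(1) bounds $m$ by the bounded function $W_{\tilde f}(0,\cdot)$ up to $\pm1$), but here $\phi$ is a leaf of $\mathcal{F}_1$, and the comparison to $\mathcal{F}_*$ is only \emph{locally} bounded on $D^*$, so near $0$ nothing prevents $m_{\tilde f_{2mb},\phi}$ from failing to be $\mu$-integrable. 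Since your counting step gives precisely $\tilde\mu(O(\tilde z))=-\int m_{\tilde f_{2mb},\phi}\,d\mu$ (both sides possibly $+\infty$), establishing integrability \emph{is} the content of the lemma; assuming it via Theorem~\ref{integral1} is circular. This is exactly the pitfall the paper flags in the remark after the proof: the naive telescoping $\int\bigl((p_1-\pi_1)\circ\tilde f_{2mb}-(p_1-\pi_1)\bigr)\,d\mu=0$ is illegitimate because $p_1-\pi_1$ may not be $\mu$-integrable. The paper's actual proof replaces this with a Birkhoff-ergodic argument: it compares the a.e.\ Birkhoff limits $p_*$ and $\pi_*$ of $p_1\circ\tilde f_{2mb}-p_1$ (bounded, hence integrable) and $\pi_1\circ\tilde f_{2mb}-\pi_1$ (only sign-definite), shows $p_*=\pi_*$ a.e.\ via a telescoping estimate along recurrence times using that $p_1-\pi_1$ is \emph{locally} bounded and $\mu$-a.e.\ point is recurrent, and then deduces that $\int(\pi_1\circ\tilde f_{2mb}-\pi_1)\,d\mu=\int(p_1\circ\tilde f_{2mb}-p_1)\,d\mu=-(a-b\alpha)$. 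You would need to supply that step (or an equivalent proof of integrability of $m_{\tilde f_{2mb},\phi}$) to make your argument complete.
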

\begin{rem}
The number $\mu(O(\tilde{z}))$ is independent of $z$ and its lift; it is an invariant of the map, meaning it is independent of the isotopy with fixed ends, and of the invariant measure. In the proof, we show that it is also independent of the invariant probability measure.  
\end{rem}
\begin{proof} 
Without loss of generality, we assume $\mu$ is an invariant probability measure of $f:\mathbb{R}^{2}\to\mathbb{R}^{2}$ that satisfies the condition $\mu(\{0\})=0$ and $\mu(\mathbb{D})=1$, 
The set $\tilde{\mathbb{D}}_{a/b}$ has the following form:
\[
\tilde{\mathbb{D}}_{a/b}=\mathbb{R}\times(0,1-a/b+\alpha].
\]
In this case the fundamental covering automorphism $T$ can be explicitely written as
$$T(x,y):=(x+2\pi,y).$$
Finally define the projection map 
\[
p_{1}:\tilde{\mathbb{D}}_{a/b}\to \mathbb{R} \qquad (x,y)\mapsto p_1(x,y)=x.
\]
By Theorem \ref{rotation}, $\mu$ has a rotation number $\alpha$ for the lift $\tilde{f}$ to $\tilde{\mathbb{D}}_{a/b}$ given by the decomposition of $f$. Therefore, it has a rotation number 
$$b\alpha-a$$ 
for the lift $\tilde{{f}}_{2mb}= \tilde{f}^b\circ {T}^{-a}$ of $f^b$ that fixes the points of the preimage of $S_{a/b}$ by the covering projection. Let us explain why $a-b\alpha$ is the measure of the domain between a leaf $\tilde \phi$ (included) and $\tilde{{f}}_{2mb}(\tilde\phi)$ (excluded).

 Consider the first coordinate projection $p_1$ and the map $\pi_1$ that takes the values $k$ in the domain contained between $T^{k}(\tilde\phi)$ (included) and $T^{k+1}(\tilde{\phi})$ (excluded). 

The real functions $\pi_1\circ \tilde{f}_{2mb} -\pi_1$, $p_1\circ \tilde{f}_{2mb} -p_1$, $p_1-\pi_1$, being invariant by $T$, can be seen as defined on $\mathbb{D}^*_{a/b}$ and that we will use later the equality
$$\pi_1\circ \tilde{ f}^{i+1}_{2mb} -\pi_1\circ\tilde{ f}^i_{2mb}= (\pi_1\circ \tilde{ f}_{2mb} -\pi_1)\circ f^i_{2mb}$$ and similar ones for the other maps.
 
We have:
\[\int_{\mathbb{D}^{*}_{a/b}} (p_1\circ \tilde{{f}}_{2mb}-p_1 )~d{\mu} =-(a-b\alpha)\]
by definition of the rotation vector. We also know that
\begin{equation}\label{equalityarea}
    \int_{\mathbb{D}^{*}_{a/b}} (\pi_{1}\circ \tilde{{f}}_{2mb}-\pi_{1}) ~d{\mu} = -\tilde \mu(O(\tilde{z})).
\end{equation}
To see why this holds, note that $\pi_{1}\circ
\tilde{{f}}_{2mb}-\pi_{1}$ takes non positive values. Moreover, for
every point $x\in \mathbb{D}^*_{a/b}$, the fact that $(\pi_{1}\circ
\tilde{{f}}_{2mb}-\pi_{1})(x) =-k<0$ means that there exists a lift
$\tilde x$ of $x$ such that
$$T(\tilde x)\in L(\tilde f^{-1}_{2mb}(\tilde\phi_{\tilde z})),\,\tilde
x\in \overline{R(\tilde f^{-1}_{2mb}(\tilde\phi_{\tilde z}))},\dots
,\,T^{-k+1}(\tilde x)\in \overline{R(\tilde
f^{-1}_{2mb}(\tilde\phi_{\tilde z}))},\, T^{-k}(\tilde x)\in \overline
{R(\tilde\phi_{\tilde z})}.$$ So we have
$$\tilde \mu(O(\tilde{z}))=\sum_{k>0} k\mu(\{x\in
\mathbb{D}^*_{a/b}\,\vert \enskip(\pi_{1}\circ
\tilde{{f}}_{2mb}-\pi_{1})(x) =-k\})= -\int_{\mathbb{D}^{*}_{a/b}}
(\pi_{1}\circ \tilde{{f}}_{2mb}-\pi_{1}) ~d{\mu}.$$
By Birkhoff's ergodic theorem for $\mu$-almost every $z$, the sequences 
\[\frac{\sum_{i=0}^{n-1}(p_1\circ \tilde{{f}}_{2mb}-p_1)\circ f^{i}_{2mb}(z)}{n} \ \ \mbox{and} \ \ \frac{\sum_{i=0}^{n-1}(\pi_{1}\circ \tilde{{f}}_{2mb}-\pi_{1})\circ f^{i}_{2mb}(z)}{n}\]
converge, the first one to $p_{*}(z)\in \mathbb{R}$ the second one to $\pi_{*}(z)\in[-\infty, 0].$ Moreover, 
\begin{itemize}
    \item $p_{*}$ is integrable and $\int_{\mathbb{D}^{*}_{a/b}} p_{*}~d\mu=\int_{\mathbb{D}^{*}_{a/b}} p_{1}\circ \tilde{f}_{2m}-p_{1}~d\mu,$
    \item $\int_{\mathbb{D}^{*}_{a/b}} \pi_{*}~d\mu=\int_{\mathbb{D}^{*}_{a/b}} \pi_{1}\circ \tilde{f}_{2m}-\pi_{1}~d\mu.$
\end{itemize}
We know that $\mu$-almost every point is recurrent and that the function $(p_1-\pi_1)$ is locally bounded.  This implies $\pi_{*}(z)=p_{*}(z)$ for $\mu$-almost every $z$ because when we consider the Birkhoff means of the two functions applied between $0$ and $n_k$, where ${f}_{2mb}^{n_k}(z)$ is close to $z$, we find that
\begin{align*}
  p_{*}(z)-\pi_{*}(z) &=\lim_{k\to \infty}\frac{\sum_{i=0}^{n_{k}-1}(p_1\circ \tilde{{f}}_{2mb}-p_1)\circ f^{i}_{2mb}(z)-(\pi_{1}\circ \tilde{{f}}_{2mb}-\pi_{1})\circ f^{i}_{2mb}(z)}{n_{k}}\\
   &=\lim_{k\to \infty}\frac{\sum_{i=0}^{n_{k}-1}(p_1\circ \tilde{{f}}_{2mb}^{i+1}-p_1\circ \tilde{{f}}_{2mb}^{i})(z)-(\pi_{1}\circ \tilde{{f}}_{2mb}^{i+1}-\pi_{1}\circ \tilde{{f}}_{2mb}^{i})(z)}{n_{k}}\\
   &=\lim_{k\to \infty}\frac{\sum_{i=0}^{n_{k}-1}(p_1\circ \tilde{{f}}_{2mb}^{i+1}-\pi_{1}\circ \tilde{{f}}_{2mb}^{i+1})(z)-(p_1\circ \tilde{{f}}_{2mb}^{i}-\pi_{1}\circ \tilde{{f}}_{2mb}^{i})(z)}{n_{k}}\\
   &=\lim_{k\to \infty}\frac{(p_1\circ \tilde{{f}}_{2mb}^{n_{k}}-\pi_{1}\circ \tilde{{f}}_{2mb}^{n_{k}})(z)-(p_1-\pi_{1})(z)}{n_{k}}\\
   &=\lim_{k\to \infty}\frac{(p_1-\pi_{1})\circ f^{n_{k}}_{2mb}(z)-(p_1-\pi_{1})(z)}{n_{k}}=0
\end{align*}
for $\mu$-almost every $z$. This completes the proof of Lemma \ref{mainclaim2}.
\end{proof}
\begin{rem}
We could attempt to prove Lemma \ref{mainclaim2} using the following approach:
\begin{align*}
\int_{\mathbb{D}^{*}_{a/b}}  (p_1\circ \tilde{{f}}_{2mb} - p_1) ~d\mu - \int_{\mathbb{D}^{*}_{a/b}}  (\pi_{1}\circ \tilde{{f}}_{2mb} - \pi_{1})~ d\mu \\
= \int_{\mathbb{D}^{*}_{a/b}}  (p_1\circ \tilde{{f}}_{2mb} - \pi_1\circ \tilde{{f}}_{2mb}) - (p_1 - \pi_1))~ d\mu\ \\= \int_{\mathbb{D}^{*}_{a/b}}  ((p_1 - \pi_{1})\circ \tilde{{f}}_{2mb} - (p_1 - \pi_1))~ d\mu\
&= 0.
\end{align*}
This might seem correct at first glance.  However, the last equality is not correct because one does not know if $p_1-\pi_1$
 is integrable, which is necessary to use the fact that $\mu$ is invariant.
\end{rem}
\begin{rem}
    Lemma \ref{mainclaim2} holds for any Brouwer radial foliation, not just for the foliation constructed by the second author in \cite{Cal}.
\end{rem}
We now introduce the following notation 
 $$W_{a/b}:= \{(z, z') \in \mathbb{D}^{*}_{a/b} | \  z\neq z^{'} \}.$$
 The following result is a generalization of Lemma 4.2 in \cite{Cal} for every couple of measures instead of the product of the Lebesgue measure by itself. 
 \begin{lem}\label{bound2}
For any two invariant probability measures $\nu$ and $\mu$ of  $f$ such that $\mu(\{0\})=\nu(\{0\})=0$ and $\mu(\mathbb{D})=\nu(\mathbb{D})=1$, the following holds.
 \begin{equation*}
\int_{W_{a/b}}\overline{{\tau}}(z,z',\mathcal{F}_{1},f^{-b},(\mathcal{F}_{1}))~d\mu(z)\times d \nu(z')\leq 8 mb(a-b\alpha).
\end{equation*}
\end{lem}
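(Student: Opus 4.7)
The plan is to combine a pointwise bound on $|\tau|$ coming from Proposition~\ref{cor} with a support bound on the integrand coming from the Brouwer property of Proposition~\ref{brouwer} and the area identity of Lemma~\ref{mainclaim2}. The product $4mb\cdot 2(a-b\alpha)$ will give the desired $8mb(a-b\alpha)$.

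First I would unfold the $T$-orbit sum defining $\overline{{\tau}}$ by lifting. Let $\tilde\nu$ be the $T$-invariant $\sigma$-finite lift of $\nu$ to $\widetilde{\mathbb{D}}^{*}_{a/b}$. Choosing a measurable lift $\tilde z$ of each $z\in\mathbb{D}^{*}_{a/b}$, the identity
\begin{equation*}
\int_{W_{a/b}}\overline{{\tau}}(z,z',\mathcal F_1,f^{-b}(\mathcal F_1))\,d\mu(z)\,d\nu(z')=\int_{\mathbb{D}^{*}_{a/b}}\!\int_{\widetilde{\mathbb{D}}^{*}_{a/b}}\!|\tau(\tilde z,\tilde w,\mathcal F_1,f^{-b}(\mathcal F_1))|\,d\tilde\nu(\tilde w)\,d\mu(z)
\end{equation*}
follows by summing over lifts, since $\sum_{k\in\Z}|\tau(\tilde z,T^{k}\tilde z',\cdots)|$ integrated against $\nu$ is the $\tilde\nu$-integral over the universal cover. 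Proposition~\ref{cor}, together with the sign-symmetry $\tau(\tilde z,\tilde w,\mathcal F',\mathcal F)=-\tau(\tilde z,\tilde w,\mathcal F,\mathcal F')$ applied with $f^{-b}(\mathcal F_1)$ in place of $\mathcal F_1$, gives the pointwise bound $|\tau(\tilde z,\tilde w,\mathcal F_1,f^{-b}(\mathcal F_1))|\le 4mb$.

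The heart of the proof is to show that for fixed $\tilde z$ the function $\tilde w\mapsto\tau(\tilde z,\tilde w,\mathcal F_1,f^{-b}(\mathcal F_1))$ is supported in a set of $\tilde\nu$-mass at most $2(a-b\alpha)$. Using the invariance relation $\theta_{\tilde z,\tilde w}(f^{-b}(\mathcal F_1))=\theta_{\tilde{f}_{2mb}(\tilde z),\tilde{f}_{2mb}(\tilde w)}(\mathcal F_1)$, the quantity $\tau$ is the variation of the integer lift $\theta(\mathcal F_1)$ along the isotopy $s\mapsto(\tilde{f}_s(\tilde z),\tilde{f}_s(\tilde w))$. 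Proposition~\ref{brouwer} (Brouwer property) implies that this variation vanishes outside the strip on the left of $\tilde\phi_{\tilde z}$ swept by $\tilde{f}_{2mb}^{-1}$ and the mirror strip on the right swept by $\tilde{f}_{2mb}$: concretely, the support sits in $O(\tilde z)\cup O^{*}(\tilde z)$, where $O^{*}(\tilde z):=\overline{L(\tilde{f}_{2mb}(\tilde\phi_{\tilde z}))}\cap R(\tilde\phi_{\tilde z})$. Outside those two strips the integer-valued $\theta$'s for the two foliations can be chosen to coincide (not merely to agree modulo $4$), forcing $\tau=0$. By Lemma~\ref{mainclaim2} applied to $\nu$ one has $\tilde\nu(O(\tilde z))=a-b\alpha$, and since $\tilde{f}_{2mb}(O(\tilde z))=O^{*}(\tilde z)$ up to a $\tilde\nu$-null set and $\tilde\nu$ is $\tilde{f}_{2mb}$-invariant, also $\tilde\nu(O^{*}(\tilde z))=a-b\alpha$.

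Assembling, for every $\tilde z$,
\begin{equation*}
\int_{\widetilde{\mathbb{D}}^{*}_{a/b}}|\tau(\tilde z,\tilde w,\mathcal F_1,f^{-b}(\mathcal F_1))|\,d\tilde\nu(\tilde w)\le 4mb\bigl(\tilde\nu(O(\tilde z))+\tilde\nu(O^{*}(\tilde z))\bigr)=8mb(a-b\alpha),
\end{equation*}
a bound independent of $z$; integrating against $d\mu(z)$ then yields the lemma. The delicate step is making the support claim rigorous: one must use the Brouwer line property together with the defining properties of the integer lift $\theta$ to promote the mod-$4$ equality $\dot\theta_{\tilde z,\tilde w}(\mathcal F_1)=\dot\theta_{\tilde z,\tilde w}(f^{-b}(\mathcal F_1))$ valid outside the two strips to an equality of integer lifts, which is what makes $\tau$ actually vanish there. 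Everything else---Fubini, sign symmetry of $\tau$, the $\tilde{f}_{2mb}$-invariance of $\tilde\nu$, and direct appeals to Proposition~\ref{cor} and Lemma~\ref{mainclaim2}---is routine.
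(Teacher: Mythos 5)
Your overall plan---lift the $T$-orbit sum to the universal cover, multiply a pointwise bound on $|\tau|$ coming from Proposition~\ref{cor} by a support bound whose measure is controlled by Lemma~\ref{mainclaim2}---uses the same two ingredients that the paper cites, and reaching $8mb(a-b\alpha)$ as $4mb\cdot 2(a-b\alpha)$ is plausible numerology. But the key step, the support claim, is not justified and I believe it is false as stated.

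The gap is this: $\tau(\tilde z,\tilde w,\mathcal F_1,f^{-b}(\mathcal F_1))$ is not a function of the \emph{endpoints} $\mathcal F_1$ and $f^{-b}(\mathcal F_1)$ alone; it is the variation of the integer lift $\theta_{\tilde z,\tilde w}$ along the \emph{path} $s\mapsto f_s^{-1}(\mathcal F_1)$ in $\mathfrak F$. It therefore vanishes only if $\dot\theta_{\tilde z,\tilde w}(f_s^{-1}(\mathcal F_1))$ stays locked in one of the open states $\{\dot 1\}$ or $\{\dot 3\}$ for \emph{all} $s\in[0,2mb]$, i.e. only if $\tilde w$ is never crossed by the moving leaf of $f_s^{-1}(\mathcal F_1)$ through $\tilde z$. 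The support of $\tau(\tilde z,\cdot)$ is thus the full region swept by that moving leaf, not the strip between the initial leaf and its final position. Proposition~\ref{brouwer} controls the relative position of $\tilde\phi_{\tilde z}$ and $\tilde f_{2mb}(\tilde\phi_{\tilde z})$ only at the endpoint $s=2mb$; it says nothing about intermediate $s$, so it cannot rule out the leaf oscillating back and forth and crossing points far outside $O(\tilde z)\cup O^*(\tilde z)$. Indeed the very bound $\tau\le 4mb$ of Proposition~\ref{cor} is consistent with up to $mb$ full revolutions, which is incompatible with the confinement you need. There is also a secondary mismatch: your $O^*(\tilde z)$ is built from $\tilde f_{2mb}(\tilde\phi_{\tilde z})$, whereas the foliation $f^{-b}(\mathcal F_1)$ entering $\tau$ has its leaf through $\tilde z$ equal to $\tilde f_{2mb}^{-1}$ applied to the $\mathcal F_1$-leaf through $\tilde f_{2mb}(\tilde z)$, a genuinely different curve, so the two strips you name are not even the natural candidates.

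What the cited Lemma~4.2 of \cite{Cal} actually does, and what you would need here, is finer: decompose the isotopy into its $mb$ close-to-identity pieces, on each of which the variation is bounded by a small constant and the region swept by the leaf is a thin strip whose measures can be summed and identified via Lemma~\ref{mainclaim2} (the identity \eqref{measurebound}) with $a-b\alpha$. That decomposition is where both factors $m$ and $b$ genuinely enter, rather than via a crude product of the global pointwise bound with a global two-strip support bound. A smaller point: Proposition~\ref{cor} as stated only gives $\tau\le 4mb$; your sign-symmetry argument for the reverse inequality would require applying it with the roles of $\mathcal F_1$ and $f^{-b}(\mathcal F_1)$ swapped, which is not what the proposition says and is not obviously equivalent, since $\mathcal F_1$ is a specifically constructed foliation.
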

\begin{rem}
 The proof of Lemma \ref{bound2} is identical to the proof of Lemma 4.2 in \cite{Cal}. It relies on Proposition \ref{cor} and the identity \ref{measurebound} for the Lebesgue measure. However, based on Lemma \ref{mainclaim2}, we know that this identity \ref{measurebound} holds for any two invariant probability measures. Therefore, we omit it
\end{rem}
The following theorem is a generalization of Theorem 1.1 \cite{Cal} for any pair of invariant measures.
\begin{thm}\label{intmeas}
   For any two invariant probability measures $\nu$ and $\mu$ of $f\big|_{\mathbb{D}}$ such that $\mu(\{0\})=\nu(\{0\})=0$. We have the following
 \begin{equation}\label{identity1}
     \int_{\mathbb{D}\times \mathbb{D}}W_{\tilde{f}}(x,y)~d\mu\times d\nu=\rho(\tilde{f}).
 \end{equation}
\end{thm}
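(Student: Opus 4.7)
Fix $\mu$ and $\nu$ as in the statement. Since $0$ is the only periodic point of $f|_\mathbb{D}$ and $\mu(\{0\})=\nu(\{0\})=0$, both measures are atomless; consequently $(\mu\times\nu)(\Delta)=0$, so $\eta:=\mu\times\nu$ satisfies the hypotheses of Theorem~\ref{integral}. The plan is to apply Theorem~\ref{integral} to the iterate $f^b$ on the enlarged disk $\mathbb{D}_{a/b}$ (with $\mu,\nu$ trivially extended by zero and still $f$-invariant), equipped with the Calvez isotopy and the foliation $\mathcal F_1$, for a sequence of rationals $a/b\in(\alpha,\beta)$ approximating $\alpha$, and then to use Lemma~\ref{bound2} to control the error.

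With the lift $\tilde f^b\circ T^{-a}$ (the endpoint of the Calvez isotopy on $\widetilde{\mathbb{D}}_{a/b}$) and a leaf $\phi$ of $\mathcal F_1$, Theorem~\ref{integral} gives
\[
\int_{W_{a/b}} W_{\tilde f^b\circ T^{-a}}\,d\eta \;=\; \int_{W_{a/b}} \lambda_{f^b,\mathcal F_1}\,d\eta \;+\; \int m_{\tilde f^b\circ T^{-a},\phi}\,d\mu .
\]
By Theorem~\ref{integral1} and Theorem~\ref{rotation}, applied to the lift $\tilde f^b\circ T^{-a}$ whose rotation number under $\mu$ is $b\rho(\tilde f)-a$, the last term equals $b\rho(\tilde f)-a$. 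By inequality~(\ref{inequality}) and Lemma~\ref{bound2}, the first term on the right is bounded in absolute value by $8mb(a-b\alpha)$; these same bounds underwrite the integrability hypotheses (supplemented, for $m_{\tilde f^b\circ T^{-a},\phi}$, by a comparison with the Euclidean foliation $\mathcal F_*$, for which Proposition~\ref{property}(1) makes the corresponding $m$ globally bounded).

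To bring $\int W_{\tilde f}\,d\eta$ into the picture, I note that the Calvez isotopy fixes $S_{a/b}$ pointwise, whereas the $b$-fold concatenation of the original isotopy $\tilde I$ spins $S_{a/b}$ through $a$ full turns; hence these two isotopies from $\mathrm{id}$ to $f^b$ differ by $a$ generators of $\pi_1(\mathrm{Homeo}_*(\mathbb{D}_{a/b}))\cong\mathbb Z$, and a full rotation adds $+1$ to every winding number, so
\[
W_{\tilde f^b}(x,y) \;=\; W_{\tilde f^b\circ T^{-a}}(x,y) + a .
\]
Combined with the cocycle identity $W_{\tilde f^b}(x,y)=\sum_{i=0}^{b-1}W_{\tilde f}(f^i(x),f^i(y))$ and the $(f\times f)$-invariance of $\eta$, this yields $\int W_{\tilde f^b}\,d\eta=b\int W_{\tilde f}\,d\eta$. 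Assembling the three displays gives
\[
\left|\int W_{\tilde f}\,d\eta \;-\; \rho(\tilde f)\right| \;\leq\; 8m(a-b\alpha) .
\]

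To conclude, I take a sequence of rationals $a_n/b_n\in(\alpha,\beta)$ above $\alpha$ with $a_n-b_n\alpha\to 0$ (for instance the appropriate subsequence of continued-fraction convergents of $\alpha$, which is eventually inside $(\alpha,\beta)$ since $\beta>\alpha$). Since the constant $m$ depends only on the fixed decomposition of $f$ and not on $b$, the right-hand side tends to $0$ and (\ref{identity1}) follows. The two points I expect to require the most care are the $\pi_1$-bookkeeping that produces the additive shift by $a$ between $W_{\tilde f^b}$ and $W_{\tilde f^b\circ T^{-a}}$, and the verification of the integrability hypothesis of Theorem~\ref{integral} for the Calvez foliation $\mathcal F_1$—which I would handle by first running the argument with $\mathcal F_*$ (where Proposition~\ref{property} gives automatic global bounds) and transferring to $\mathcal F_1$ via the local boundedness of the difference of the corresponding $m$-functions noted after Remark on $\mathrm{rot}_{\tilde f}(z)$.
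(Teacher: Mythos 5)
Your proposal reproduces the paper's proof essentially step for step: extend $\mu,\nu$ by zero to $\mathbb{D}_{a/b}$, combine Theorem \ref{integral} with Theorem \ref{integral1}, Theorem \ref{rotation}, inequality (\ref{inequality}) and the bound of Lemma \ref{bound2} to get $\bigl|\int W_{\tilde f}\,d(\mu\times\nu)-\rho(\tilde f)\bigr|\leq 8m(a-b\alpha)$, and then let $a_n/b_n\to\alpha$. The only (cosmetic) difference is that you spell out the $\pi_1$-bookkeeping giving $W_{\tilde f^b}=W_{\tilde f^b\circ T^{-a}}+a$ and justify the integrability hypothesis of Theorem \ref{integral} by comparison with $\mathcal F_*$; the paper leaves both implicit.
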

\begin{rem}
 Recall that the right-hand side of the identity \ref{identity1} depends only on the homotopy class of the isotopy with fixed ends that we choose, similarly the left-hand side changes accordingly.
\end{rem}
\begin{proof}[Proof of Theorem \ref{intmeas}]
First of all we extend the invariant probability measures $\mu$ and $\nu$ to $\mathbb{D}_{a/b}$ in the following way:
\[
\mu(A)=\mu(\mathbb{D}\cap A) \quad \text{and} \quad \nu(A)=\nu(\mathbb{D}\cap A)
\]
for every Borel set $A$ of $\mathbb{D}_{a/b}$.

Using Lemma \ref{bound2} and inequality (\ref{inequality}), we establish the integrability of $\lambda(z,z',\mathcal{F}_{1},f^{-b}(\mathcal{F}_{1}))$. In particular we have
\[
\left|\int_{W_{a/b}}\lambda(z,z',\mathcal{F}_{1},f^{-b}(\mathcal{F}_{1}))~d\mu(z)\times d \nu(z')\right|\leq 8 mb (a-b\alpha).
\]
By the last inequality together with equality (\ref{equality}), allows us to deduce that 
\[
\left|\int_{W_{a/b}}\Lambda_{\tilde{f}_{2mb},\mathcal{F}_{1},\phi}(z,z') -m_{\tilde{f}_{2mb},\phi}(z')~d \mu(z)\times d \nu(z')\right|\leq 8 mb (a-b\alpha)
\]
if $\phi$ is a leaf of $\mathcal {F}_1$.

By Theorem \ref{rotation}, we know that $m_{\tilde{f}_{2mb},\phi}(z)$ is integrable, and this, together with the last inequality, implies the following:
\[
\left|\int_{W_{a/b}}\Lambda_{\tilde{f}_{2mb},\mathcal{F}_{1},\phi}(z,z') ~d \mu(z)\times d \nu(z')-\int_{W_{a/b}}m_{\tilde{f}_{2mb},\phi}(z')~d \mu(z)\times d \nu(z')\right|\leq
\]
$$\leq 8 mb (a-b\alpha).$$
The fact that  $\mu(\{0\})=\nu(\{0\})=0$ implies that  $$\mu\times \nu(\{(z,z')\in \mathbb{D}^{*}_{a/b}\times\mathbb{D}^{*}_{a/b}| \ z=z'\})=0,$$ 
using this together with previous inequality  yields
\[
\left|\int_{\mathbb{D}^{*}_{a/b}\times \mathbb{D}^{*}_{a/b}}\Lambda_{\tilde{{f}}_{2mb},\mathcal{F}_{1},\phi}(z,z')~ d \mu(z)\times d \nu(z')-\int_{\mathbb{D}^{*}_{a/b}\times \mathbb{D}^{*}_{a/b}}m_{\tilde{f}_{2mb},\phi}(z')~d \mu(z)\times d \nu(z')\right|\leq
\]
$$\leq 8 mb (a-b\alpha).$$
Combining this with Theorem  \ref{integral1} and \ref{integral}, we can now state the following result:
\[\left|\int_{\mathbb{D}^{*}_{a/b}\times \mathbb{D}^{*}_{a/b}}W_{\tilde{f}_{2mb}}(x,y) ~ d \mu(z)\times d \nu(z')-\int_{\mathbb{D}^{*}_{a/b}}\text{rot}_{\tilde{f}_{2mb}}(\nu)~d \mu(z)\right|\leq
\]
$$\leq 8 mb (a-b\alpha).$$
We also know that ${f}_{2mb}=\tilde{f}^{b}\circ T^{-a}$ and so we obtain 
\begin{align*}
   \left| b\int_{\mathbb{D}^{*}_{a/b}\times \mathbb{D}^{*}_{a/b}}W_{\tilde{f}}(x,y)~d \mu(z)\times d \nu(z') -a-b\int_{\mathbb{D}^{*}_{a/b}}\text{rot}_{\tilde{f}}(\nu)~d \mu(z)+a\right|\leq 8b m (a-b\alpha)
   \end{align*}
which can be written as
\[\left|\int_{\mathbb{D}^{*}_{a/b}\times \mathbb{D}^{*}_{a/b}}W_{\tilde{f}}(x,y)~ d \mu(z)\times d \nu(z')-\alpha\right|\leq 8 m (a-b\alpha).\]
One can find a sequence $(a_{n}, b_{n})_{n\geq 0}$ such that $\lim_{n\to + \infty} a_{n}-b_{n}\alpha = 0.$ Writing
\[\left|\int_{\mathbb{D}^{*}_{a_{n}/b_{n}}\times \mathbb{D}^{*}_{a_{n}/b_{n}}}W_{\tilde{f}}(x,y) ~d \mu(z)\times d \nu(z')-\alpha\right|\leq 8 m  (a_{n}-b_{n}\alpha). \]
By construction of $\mu$ and $\nu$, we have
\[
\left|\int_{\mathbb{D}\times \mathbb{D}}W_{\tilde{f}}(x,y)~ d \mu(z)\times d \nu(z')-\alpha\right|\leq 8 m  (a_{n}-b_{n}\alpha). 
\]
and letting $n$ tend to $+\infty$, one obtains 
\[
\int_{\mathbb{D}\times \mathbb{D}}W_{\tilde{f}}(x,y)~ d \mu(z)\times d \nu(z')-\alpha=0.
\]
This completes the proof of Theorem \ref{intmeas}.
\end{proof}
\section{Proof of Main Results}
In this section, we use the last result from the previous section together with a Krylov-Bogolyubov type argument to prove Theorems \ref{asymp-mean} and \ref{asym-link}.

In his thesis, the first author established the inequality (\ref{actionbound}) for $C^{\infty}$ smooth area-preserving diffeomorphisms, but it was not stated as a theorem. However, one could find it in the proof of Theorem 4.3.2 in \cite{BecharaSenior2023}, which is a generalization of the main argument in \cite{bechara2021asymptotic}. By repeating the same argument, we could also show that it works for $C^{1}$ area-preserving diffeomorphisms. As the proof remains the same, we omit its details. 
\begin{thm}{\cite{BecharaSenior2023}}\label{bech}
If $f:\mathbb{D}\to \mathbb{D}$ is a $C^{1}$ area-preserving diffeomorphism, then 
\begin{equation}\label{actionbound}
    \Big|\frac{a_{\tilde{f}^{n}}(x)-\int_{\mathbb{D}}W_{\tilde{f}^{n}}(x,y)~d\omega(y)}{n}\Big|\leq \frac{8}{n}.
\end{equation}
Here  $\omega=\frac{r}{\pi}dr\wedge d\theta$.
\end{thm}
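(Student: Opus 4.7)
The plan is to establish the stronger, unnormalized bound
$$\Bigl| a_{\tilde g}(x) - \int_{\mathbb{D}} W_{\tilde g}(x,y)\, d\omega(y) \Bigr| \leq 8$$
for \emph{any} $C^1$ area-preserving diffeomorphism $g$ of $\mathbb{D}$ equipped with an isotopy $\tilde g$ from $\mathrm{id}$ to $g$, and then apply it with $g = f^n$, $\tilde g = \tilde f^n$ (the naturally concatenated isotopy), dividing by $n$ at the end. The crucial point is that the right-hand side is a universal constant, independent of $g$, and hence in particular independent of $n$.

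To derive this identity I would express both sides as time integrals along $\tilde g = (g_t)_{t \in [0,1]}$, generated by a time-dependent vector field $X_t$. The defining relation $da_{\tilde g} = g^{*}\beta - \beta$ together with the boundary normalization yields, by Cartan's formula,
$$a_{\tilde g}(x) = \int_0^1 \beta(X_t)(g_t(x))\, dt + (\text{boundary term}).$$
For the winding number, the formula $W_{\tilde g}(x,y) = \frac{1}{2\pi}\int_0^1 \frac{d}{dt}\arg(g_t(y) - g_t(x))\, dt$, combined with Fubini and the $g_t$-invariance of $\omega$ (via the change of variables $y \mapsto g_t^{-1}(y)$ inside the $t$-integral), recasts $\int_{\mathbb{D}} W_{\tilde g}(x,y)\, d\omega(y)$ as a space-time integral of an explicit $2$-form on $\mathbb{D} \setminus \{x\}$. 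The heart of the proof is then a Stokes-type calculation: recognizing $\frac{1}{2\pi}\,d\arg(y - x)$ as the harmonic angle form with singularity at $y = x$, one excises a disk of radius $\varepsilon$ around $x$, sends $\varepsilon \downarrow 0$, and reads off from the residue at $y = x$ exactly the term $\int_0^1 \beta(X_t)(g_t(x))\, dt$, leaving a remainder $B_{\tilde g}(x)$ supported on $\partial \mathbb{D}$.

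It then remains to bound $|B_{\tilde g}(x)| \leq 8$. Since $B_{\tilde g}(x)$ only records the contribution of $\arg(g_t(y) - g_t(x))$ integrated over $y \in \partial \mathbb{D}$ and $t \in [0,1]$, and since $|\arg(g_t(y) - g_t(x))|$ is uniformly controlled modulo full turns, the normalization factors coming from $\omega = \frac{r}{\pi}dr\wedge d\theta$ and the $2\pi$ in the definition of the winding number combine to yield a universal constant like $8$. The main obstacle is twofold: first, the Stokes computation must be justified at the singularity $y = x$ in the merely $C^1$ setting, which is handled by the usual $\varepsilon$-excision argument but requires some care with continuity in $g$; second, one must verify that the constant $8$ is genuinely universal and hides no dependence on $g$ (and therefore on $n$ when $g = f^n$). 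Bechara Senior's thesis carries out this argument for $C^\infty$ maps in the proof of Theorem 4.3.2, and the same scheme extends to the $C^1$ setting by continuity in the $C^1$ topology of all the quantities in play, together with the density of $C^\infty$ area-preserving diffeomorphisms in the $C^1$ ones.
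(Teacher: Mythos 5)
The paper omits the proof of this theorem, pointing instead to the first author's thesis and \cite{bechara2021asymptotic}, so there is no in-paper argument to compare against line by line. Your reduction --- prove the universal, $n$-independent bound $\bigl| a_{\tilde g}(x) - \int_{\mathbb{D}} W_{\tilde g}(x,y)\, d\omega(y) \bigr| \leq 8$ for a single isotopy $\tilde g$ and then substitute $\tilde g = \tilde f^n$ --- is exactly the content of the cited reference, and the Stokes/excision scheme you outline is the right mechanism, so you are on the same track structurally.

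Two points need tightening. First, the Cartan-formula step is off as written: with $\iota_{X_t}\omega = dH_t$ on the simply connected disk one finds $L_{X_t}\beta = d\bigl(\beta(X_t) + H_t\bigr)$, hence $a_{\tilde g}(x) = \int_0^1 \bigl(\beta(X_t)+H_t\bigr)(g_t(x))\, dt + c$; the Hamiltonian term depends on the full interior trajectory $g_t(x)$ and cannot be folded into a ``boundary term'' --- in fact it is precisely what the excision residue at $y=x$ has to reproduce when you compare with the winding integral, so suppressing it obscures the point of the computation. Second, you assert rather than derive the constant $8$. The crux of the whole proof is that the boundary remainder $B_{\tilde g}(x)$ is bounded \emph{uniformly in $g$}, with no hidden dependence on $\Vert Dg\Vert$ or the Hofer length of the isotopy; that uniformity is exactly what makes the estimate $n$-independent when $g = f^n$, and it comes from a topological bound on the winding of $g_t(y)-g_t(x)$ as $y$ runs over $\partial\mathbb{D}$, together with area preservation. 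Gesturing at the normalizations $\frac{r}{\pi}\,dr\wedge d\theta$ and $2\pi$ does not close this; you need to produce the actual bound on $B_{\tilde g}$. Your $C^1$-by-density step is fine (both sides are continuous in $C^1$ and the bound is uniform), though the paper takes the slightly more direct stance that the smooth argument runs verbatim in the $C^1$ category.
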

We will now prove Theorem \ref{asymp-mean} by invoking Theorem \ref{bech} and Theorem \ref{intmeas}.
\begin{proof}[Proof of Theorem \ref{asymp-mean}]
Theorem \ref{bech} implies the following claim:

\underline{Claim}: For any invariant probability measure $\mu$, we have 
\[
\int_{D}a_{\tilde{f}}(x)~d\mu(x)=\rho(\tilde{f}).
\]
\begin{claimproof}
  Using inequality (\ref{actionbound}), we obtain the following inequality:
  \[
  -\frac{8}{n}\leq\frac{a_{\tilde{f}^{n}}(x)-\int_{\mathbb{D}}W_{\tilde{f}^{n}}(x,y)~d\omega(y)}{n}\leq \frac{8}{n}.
  \] 
  First, we integrate both sides of the inequality with respect to the measure $\mu$, and then apply the following two identities: 
  \begin{equation}\label{formula3}
      \int_{\mathbb{D}}a_{\tilde{f}} (x)~d\mu(x)=\frac{\int_{\mathbb{D}}a_{\tilde{f}^{n}}(x)~d\mu(x)}{n},
  \end{equation}
  \begin{equation}\label{formula5}
\int_{\mathbb{D}}\int_{\mathbb{D}}W_{\tilde{f}}(x,y)~d\omega(y)\times d\mu(x)=\frac{\int_{\mathbb{D}}\int_{\mathbb{D}}W_{\tilde{f}^{n}}(x,y)~d\omega(y)\times d\mu(x)}{n}. 
  \end{equation}
Therefore, we have:
  \[
  -\frac{8}{n}\leq\frac{\int_{\mathbb{D}}a_{\tilde{f}^{n}}~d\mu(x)-\int_{\mathbb{D}}\int_{\mathbb{D}}W_{\tilde{f}^{n}}(x,y)~d\omega(y)\times d\mu(x)}{n}\leq \frac{8}{n}.
  \]
  The last inequality together with the equalities (\ref{formula3}) and (\ref{formula5}) implies that
  \[
  -\frac{8}{n}\leq\int_{\mathbb{D}}a_{\tilde{f}}~d\mu(x)-\int_{\mathbb{D}}\int_{\mathbb{D}}W_{\tilde{f}}(x,y)~d\omega(y)\times d\mu(x)\leq \frac{8}{n}.
  \]
By taking the limit from both sides of the inequality, we obtain:
\begin{equation}\label{formula6}
\int_{\mathbb{D}}a_{\tilde{f}}~d\mu(x)=\int_{\mathbb{D}}\int_{\mathbb{D}}W_{\tilde{f}}(x,y)~d\omega(y)\times d\mu(x).
\end{equation}
We can write the measure $\mu$ in the following form $b\delta_{0}+(1-b)\mu_{1}$ where $\delta_{0}$ is the Dirac measure at $0$, $b\in [0,1]$, and the measure $\mu_{1}$ satisfies $\mu_{1}(\{0\})=0$. Now we rewrite the equality (\ref{formula6}) using it.
$$\int_{\mathbb{D}}a_{\tilde{f}}~d\mu(x)=b\int_{\mathbb{D}}\int_{\mathbb{D}}W_{\tilde{f}}(x,y)~d\omega(y)\times d\delta_{0}(x)+(1-b)\int_{\mathbb{D}}\int_{\mathbb{D}}W_{\tilde{f}}(x,y)~d\omega(y)\times\mu_{1}(x).$$
By Theorem \ref{intmeas} and the equality (\ref{formula6}), we have the following.
\begin{equation}\label{formula}
\int_{\mathbb{D}}a_{\tilde{f}}~d\mu(x)=b\int_{\mathbb{D}}a_{\tilde{f}}~d\delta_{0}(x)+(1-b)\rho(\tilde{f}).
\end{equation}
By combining of Theorem \ref{bech} and Theorem \ref{rotation}, we get $\int_{\mathbb{D}}a_{\tilde{f}}d\delta_{0}(x)=\rho(\tilde{f})$, together with the last equality this gives the desired equality $\int_{\mathbb{D}}a_{\tilde{f}}d\mu(x)=\rho(\tilde{f})$.
\end{claimproof}

Assume the sequence \[
\left(\frac{\sum^{n-1}_{i=0}a_{\tilde{f}}(f^{i}(x))}{n}\right)_{n\geq 0}
\]
does not converge uniformly, so there exist
$\epsilon>0$ and a sequence $x_{{k}}$ such that 
\begin{equation}\label{ineq1}
   \big|\frac{\sum^{n_{k}-1}_{i=0}a_{\tilde{f}}(f^{i}(x_{{k}}))}{n_{k}}-\rho(\tilde{f})\big|\geq \epsilon.
\end{equation}
We now choose a subsequence $n_{k_{j}}$ of $n_{k}$ so that 
$$\mu_{n_{k_{j}}}(x_{k_{j}})\rightharpoonup \mu,$$ 
here,  $\mu$ is an invariant probability measure of $f:\mathbb{D}\to \mathbb{D}$. The definition of weak$^{*}$ convergence implies the following identity:
\[
\lim_{k_{j}\to \infty}\frac{\sum^{n_{k_{j}}-1}_{i=0}a_{\tilde{f}}(f^{i}(x_{{k_{j}}}))}{n_{k_{j}}}=\lim_{{k_{j}}\to \infty}\int_{\mathbb{D}}a_{\tilde{f}} \ d\mu_{n_{k_{j}}}(x_{k_{j}})=\int_{\mathbb{D}}a_{\tilde{f}} \ d\mu=\rho(\tilde{f}).
\]
 The last equality contradicts the inequality (\ref{ineq1}). This completes a proof of Theorem \ref{asymp-mean}.
\end{proof}
We can now prove Theorem \ref{asym-link} as an application of Theorem \ref{intmeas}.
\begin{proof}[Proof of Theorem \ref{asym-link}]
Let us begin by assuming that the sequence
\[
\int_{\mathbb{D}\times\mathbb{D}}W_{\tilde{f}}(x,y)~d\mu_{n}(x)\times d\mu_{n}(y)=\frac{\sum_{i=0}^{n-1}\sum_{j=0}^{n-1}W_{\tilde{f}}(f^{i}(x),f^{j}(y))}{n^{2}}
\]
does not converge uniformly on $ \mathbb{O}(\mathbb{D})$ to $\rho(\tilde f)$. This implies the existence of $\epsilon>0$, of an increasing sequence $(n_k)_{k\geq 0}$ of positive integers and of a sequence $(x_k, y_k)_{k\geq 0}$ in $\mathbb{D}\times \mathbb{D}$ such that
\begin{equation}\label{ineq2}
\left|\frac{\sum_{i=0}^{n_{k}-1}\sum_{j=0}^{n_{k}-1}W_{\tilde{f}}(f^{i}(x_{{k}}),f^{j}(y_{{k}}))}{n_{k}^{2}}-\rho(\tilde{f})\right|\geq \epsilon.
\end{equation}
By Proposition \ref{invmeas1}, we can find a subsequence $({k_{j}})_{j\geq 0}$ such that
\[
\mu_{n_{k_{j}}}(x_{k_j}) \rightharpoonup \mu 
\]
and 
\[
\mu_{n_{k_{j}}}(y_{k_j})\rightharpoonup \nu
\]
where $\mu$ and $\nu$ are invariant probability measures.

We may assume, up to passing to a further sub-sequence that the sequence $(n_{k_{j}})_{j\in \N}$ is such that $\mu_{n_{k_{j}}}(x_{k_{j}})\times \mu_{n_{k_{j}}}(y_{k_{j}})$ weakly  converges in $K$,  we denote by $\lambda$ its limit. Here $K$ is the compactification of $\mathbb{D}\times \mathbb{D}\setminus\Delta$ such that $W_{\tilde f}(z,z')$ is well-defined and continuous 
 (see Defition \ref{DefWindingNumber}), where $\Delta$ is the diagonal.

Define $\pi:K\to \mathbb{D}\times \mathbb{D}$ as the (continuous) projection, which coincides with the inclusion on $(\mathbb{D}\times \mathbb{D})\setminus\Delta$ and maps $(x, \xi)\mapsto (x,x)$ on $\partial K=T^{1}\mathbb{D}$. We have $\pi_*(\lambda)=\mu\times\nu$, and therefore $\lambda(\partial K)=0 $ if and only if $\mu\times\nu(\Delta)=0$.

Define $\rho'=\int_K W_{\tilde f} \, d\lambda$ and note that $\vert\rho'-\rho(\tilde f)\vert \geq \varepsilon$ because 
$$\int_K W_{\tilde f} \,d\lambda =\lim_{j\to +\infty} \int_KW_{\tilde f} \, d(\mu_{n_{k_j}}(x_{k_j})\times \mu _{n_{k_j}}(y_{k_j}))=\lim_{j\to +\infty} \int_{\mathbb{D}\times \mathbb{D}}W_{\tilde f} \, d(\mu_{n_{k_j}}(x_{k_j})\times \mu _{n_{k_j}}(y_{k_j})) $$
$$
=\lim_{j\to+\infty} \frac{\sum_{i=0}^{n_{k_j}-1}\sum_{j=0}^{n_{k_j}-1}W_{\tilde{f}}(f^{i}(x_{{k_j}}),f^{j}(y_{{k_j}}))}{n_{k_j}^{2}}$$ 
where each element of the sequence in the last limit satisfies (\ref{ineq2}).

In the case where $\lambda(\partial K)=0$, we can write
\[
\rho'=\int_{K} W_{\tilde{f}}~ d\lambda =\int_{K\setminus \partial K} W_{\tilde{f}} ~d\lambda= \int_{(\mathbb{D}\times \mathbb{D})\setminus\Delta} W_{\tilde{f}}~ d(\mu\times\nu)=\int_{\mathbb{D}\times \mathbb{D}} W_{\tilde{f}}~ d(\mu\times\nu).
\]
By Theorem \ref{intmeas}, we have the following: 
$$\int_{\mathbb{D}\times \mathbb{D}} W_{\tilde{f}}~ d(\mu\times\nu)= \rho(\tilde{f}).$$
This leads to a contradiction.

If $\lambda(\partial K)\neq 0 $, then $\mu\times\nu(\Delta)\not=0$, indicating that $\mu$ and $\nu$ share a common atom. However, the only invariant atomic measure is the Dirac measure $\delta_0$ because $f$ is an irrational pseudo-rotation. So, we can express $\mu=t\mu'+(1-t)\delta_0$ and $\nu=s\nu'+(1-s)\delta_0$, where $\nu'$ and $\mu'$ do not have atoms and $$\lambda= st\,\mu'\times\nu'+t(1-s) \,\mu'\times\delta_0+(1-t) \,s\delta_0\times \nu'+(1-t)(1-s)\lambda',$$where $\mu'(\{0\})= \nu'(\{0\})=0$ and $\lambda'(T^1_0 (\mathbb{D}))=1$ (here $T^1_0 (\mathbb{D})$ denotes the unit tangent plane of $\mathbb{D}$ at $0$).

For the same reasons as before, it follows that
\begin{align*}
    \int_{K}W_{\tilde{f}}~ d\lambda &= st \int_{\mathbb{D}\times \mathbb{D}} W_{\tilde{f}}~ d(\mu'\times\nu')+ t(1-s) \int_{\mathbb{D}\times \mathbb{D}} W_{\tilde{f}}~d(\mu'\times\delta_0) \\
                &\quad+   (1-t) s \int_{\mathbb{D}\times \mathbb{D}} W_{\tilde{f}}~d(\delta_0\times\nu')+(1-t)(1-s)\int_{\partial K} W_{\tilde{f}}~d\lambda'. 
\end{align*}
Here, the first three integrals are equal to $\rho(\tilde{f})$. To complete the contradiction, we need to show that the last integral is also equal to $\rho(\tilde{f})$. This is evident because the rotation number induced on the unit tangent vector by the differential is $\rho(\tilde{f})$. This concludes the proof of Theorem \ref{asym-link}.
\end{proof}
\bibliographystyle{amsalpha}
\bibliography{main}

\providecommand{\bysame}{\leavevmode\hbox to3em{\hrulefill}\thinspace}
\providecommand{\MR}{\relax\ifhmode\unskip\space\fi MR }
% \MRhref is called by the amsart/book/proc definition of \MR.
\providecommand{\MRhref}[2]{%
  \href{http://www.ams.org/mathscinet-getitem?mr=#1}{#2}
}
\providecommand{\href}[2]{#2}
\begin{thebibliography}{BSHS21}

\bibitem[BS21]{bechara2021asymptotic}
David Bechara~Senior, \emph{Asymptotic action and asymptotic winding number for area-preserving diffeomorphisms of the disk}, Annales math{\'e}matiques du Qu{\'e}bec (2021), 1--17.

\bibitem[BS23]{BecharaSenior2023}
\bysame, \emph{The asymptotic action and generalizations of a theorem of {F}athi}, {P}h.d. {T}hesis, Ruhr-Universit{\"a}t Bochum, (2023).

\bibitem[BSHS21]{senior2020relation}
David Bechara~Senior, Umberto~L. Hryniewicz, and Pedro A.~S. Salomão, \emph{On the relation between action and linking}, Journal of Modern Dynamics \textbf{17} (2021), 319--336.

\bibitem[FH21]{florio2021quantitative}
Anna Florio and Umberto Hryniewicz, \emph{Quantitative conditions for right-handedness}, arXiv ((2021)).

\bibitem[FK04]{fayad2004constructions}
Bassam Fayad and Anatole Katok, \emph{Constructions in elliptic dynamics}, Ergodic Theory and Dynamical Systems \textbf{24} (2004), no.~5, 1477--1520.

\bibitem[Fra88]{franks1988generalizations}
John Franks, \emph{Generalizations of the {P}oincar{\'e}-{B}irkhoff theorem}, Annals of Mathematics \textbf{128} (1988), no.~1, 139--151.

\bibitem[Ghy09]{ghys2009right}
{\'E}tienne Ghys, \emph{Right-handed vector fields \& the {L}orenz attractor}, Japanese Journal of Mathematics \textbf{4} (2009), no.~1, 47--61.

\bibitem[Hut16]{hutchings2016mean}
Michael Hutchings, \emph{Mean action and the \text{Calabi} invariant}, Journal of Modern Dynamics \textbf{10} (2016), 511--539.

\bibitem[Jol21]{beno}
Benoit Joly, \emph{The \text{Calabi} invariant of hamiltonian diffeomorphisms of the unit disk}, (2021).

\bibitem[LC01]{le2001rotation}
Patrice Le~Calvez, \emph{Rotation numbers in the infinite annulus}, Proceedings of the American Mathematical Society \textbf{129} (2001), no.~11, 3221--3230.

\bibitem[LC23a]{Cal}
\bysame, \emph{A finite dimensional proof of a result of {Hutchings} about irrational pseudo-rotations}, Journal de l{\textquoteright}\'Ecole polytechnique {\textemdash} Math\'ematiques \textbf{10} (2023), 837--866 (en).

\bibitem[LC23b]{calvez2023twist}
\bysame, \emph{Twist maps of the annulus: an abstract point of view}, Regular and Chaotic Dynamics (2023), 1--21.

\bibitem[Lhu18]{lhuissier2018mathematical}
Marie Lhuissier, \emph{The mathematical three body problem, simultaneoulsy addressed through theoretical research, and through popularization toward various publics}, HAL (2018).

\bibitem[NW23]{nelson2023torus}
Jo~Nelson and Morgan Weiler, \emph{Torus knotted reeb dynamics and the calabi invariant}, (2023).

\bibitem[Pir21]{Pirnapasov2021HutchingsIF}
Abror Pirnapasov, \emph{Hutchings' inequality for the calabi invariant revisited with an application to pseudo-rotations}, (2021).

\bibitem[PP22]{PirPra}
Abror Pirnapasov and Rohil Prasad, \emph{Generic equidistribution for area-preserving maps of compact surfaces with boundary}, {arXiv} (2022).

\bibitem[Pra22]{Prasad2022VolumepreservingRV}
Rohil Prasad, \emph{Volume-preserving right-handed vector fields are conformally reeb}, Journal of Fixed Point Theory and Applications \textbf{24} (2022).

\bibitem[Sar09]{sarig2009lecture}
Omri Sarig, \emph{Lecture notes on ergodic theory}, Lecture Notes, Penn. State University (2009).

\bibitem[Wei21]{weiler2021mean}
Morgan Weiler, \emph{Mean action of periodic orbits of area-preserving annulus diffeomorphisms}, Journal of Topology and Analysis \textbf{13} (2021), no.~04, 1013--1074.

\end{thebibliography}
\end{document}